\documentclass{amsart}
\usepackage{amsmath, amssymb, mathrsfs, amsfonts,  amsthm} 
\usepackage [all]{xy}
\newcommand{\ls}{\Omega \textrm {Ham} (M, \omega)} 

\newcommand{\delbar}{\bar{\partial}}

\DeclareMathOperator{\area}{area}

\DeclareMathOperator{\Diff}{Diff}
\DeclareMathOperator{\energy}{c-energy}
\newtheorem {theorem} {Theorem} [section] 
\newtheorem {conjecture} [theorem] {Conjecture}
\newtheorem{lemma}[theorem] {Lemma} 

\newtheorem {question}  [theorem] {Question}
\newtheorem {definition} [theorem] {Definition} 
\newtheorem {proposition}  [theorem]{Proposition} 
\newtheorem {corollary}[theorem]  {Corollary}

\newtheorem {remark} [theorem] {Remark}
\numberwithin {equation} {section}

\DeclareMathOperator{\grad}{grad}
\begin{document}
\author {Yasha Savelyev}
\title   {Virtual Morse theory on $\ls$}
\today       
\begin{abstract} We relate previously defined quantum characteristic
classes to Morse theoretic aspects of the Hofer length functional on $\ls$. As
an application we prove a theorem which can be interpreted as stating that this
functional behaves ``virtually'' as a perfect Morse-Bott functional with a flow.
This can be applied to study  topology and Hofer geometry of $ \text {Ham}(M, \omega)$. We
also use this to give a prediction for the index of some geodesics for this functional, which was
recently partially verified by Yael Karshon and Jennifer Slimowitz. 
 \end{abstract}
\maketitle
\textbf {Keywords:} quantum homology, Gromov-Witten invariants, Hamiltonian
group, energy flow, loop groups, Hofer geometry. 

\textbf {AMS Mathematics subject classification:} 53D45, 53D35, 22E67 
\section {Introduction}  
Hamiltonian fibrations  over Riemann
surfaces form a rich object from the point of view of Gromov Witten
theory, as the properties of holomorphic sections of such fibrations can be 
closely tied with the underlying geometry of the fibration. Here by
geometry we may mean a number of
things like curvature properties of Hamiltonian connections, geometry of
the coupling forms in the sense of \cite{GuileminSternberg}, as well as the
associated  quantities like Gromov's $K$-area, (cf. \cite{polterovich}) and the
related notion of area of a Hamiltonian fibration. 

Gromov Witten theory of Hamiltonian fibrations $M \hookrightarrow X
\xrightarrow{\pi} \Sigma$ fits into a certain $2d$ Hamiltonian cohomological
field theory, or even more generally into a
string background \cite{fieldtheory}, considerably extending Gromov Witten
theory of $(M, \omega)$. In this paper we will be concerned with a fairly small, but 
geometrically still rich part of this theory by restricting to genus 0, one
input one  output part of the data of field theory. With some further restrictions,  the data one gets is
a ring homomorphism defined in \cite{GS}: $$\Psi: H_*
(\Omega \text {Ham}(M, \omega), \mathbb{Q}) \to QH  (M).$$  

Geometry of Hamiltonian fibrations over $S ^{2}$ can be tied with Hofer
geometry of loops in $ \text {Ham}(M, \omega)$, and we are going to relate
$\Psi$ to a kind of virtual Morse theory of the positive Hofer
length functional, $L ^{+}: \ls \to \mathbb{R}$, (see \eqref{eq.pos.length}).

\subsection {Morse theory on $\ls$ and $\Psi$}
Let $h: B \to \Omega \text {Ham}(M, \omega)$ be a smooth cycle, where $B$ is a
closed oriented smooth manifold. Let 
\begin{equation} \label {eq.definitionPh} P _{h}=B \times M \times D ^{2} _{0} 
\bigcup B \times M \times   D ^{2} _{\infty}/
\sim,
\end{equation}
where $ (b, x, 1, \theta)_0 \sim (b, h _{b, \theta}(x), 1, \theta)
_{\infty}$,
using the polar coordinates $(r, 2 \pi
 \theta)$. We get a bundle $$ {p}: P
_{h} \to B,$$ with fiber 
modelled by a Hamiltonian fibration $
M \hookrightarrow X \xrightarrow{\pi} S^2$. 
A Hamiltonian connection or equivalently, a coupling form
(see \cite[Theorem 6.21]{MS2}) on the Hamiltonian bundle 
\begin{equation} \label {eq.ham.bundle} M \hookrightarrow P _{h} \to B \times S
^{2}
\end{equation} induces a family of complex structures $\{X _{b}=p ^{-1} (b), J
_{b}\}$.
The map $\Psi$ is defined by
counting fiber-wise (i.e. vertical) holomorphic curves in $p: P _{h} \to B$,
with some constraints. The details are given in Section \ref{section.prelims}. 

Let $\gamma: S ^{1} \to \text {Ham}(M, \omega)$  be a one parameter subgroup,
which is always assumed in this paper to be generated by a Morse Hamiltonian
$H$. The loop $\gamma$ is a smooth point of the functional $L ^{+}$ and is
critical, see Ustilovsky \cite{U}. Our focus will be on smooth cycles $h: B \to
\ls$ that resemble unstable manifolds for the functional $L ^{+}$ of $\gamma$,  
in the sense below. 
\begin{definition}  We say that a smooth map $h: B \to
\ls$ is Morse at $\gamma$, if the pullback of $L ^{+}$ to $B$ attains its
maximum at a unique point $\max \in B$ such that $L ^{+}$ is Morse at $\max$ and
$h (\max)=\gamma$. 
\end{definition}
To make use of this structure we construct a coupling form on $P _{h}$
naturally adapted to the above properties of $h$. As a consequence, for the
induced family $ \{J ^{h} _{b}\}$ of complex structures on $P _{h}$ all
vertical holomorphic curves in $P _{h}$ of ``maximum allowed $\energy$''
(Definition \ref{definition.energy}) localize over $\max$ and in fact
correspond to a single, distinguished flat section $\sigma _{\max}$ of the fiber
$X _{\max}$. 

One would then like that this curve is
persistent and contributes to the invariant $\Psi$. However, without further
restriction on $(h, B)$ this is not true as one can see from simple heuristic 
intuition: the map $h: B \to \ls$ which by assumption is Morse at
$\gamma$ can be homotoped below the energy level $L ^{+} (\gamma)$, unless its dimension is that of the unstable manifold
of $\gamma$ for the functional $L ^{+}$, (provided one can make sense of such
an unstable manifold). Once this
happens, the part of the invariant $\Psi$ corresponding to curves of the same class as $\sigma _{\max}$ has to vanish. (This is explained in the proof of Theorem
\ref{theorem.index}). There is a more formal obstruction: dimension of $B$ has
to be the dimension of the cokernel of the linearized Cauchy-Riemann operator
corresponding to the pair $(\sigma _{\max}, X _{\max})$, for
otherwise the index of the overall problem is not zero and our moduli space does
not even have the expected dimension. 

The index of the above Cauchy-Riemann operator will be
denoted by $I ^{virt} (\gamma)$ and we call this the \emph{virtual index of
$\gamma$}. Indeed, the name arises from intuition that the above necessary
conditions are the same and this is verified by Theorem
\ref{theorem.index} below. 
We show in \cite [Section 5.1]{GS}:
\begin{equation} \label {eq.virtualindex} I ^{virt} (\gamma)= 
\sum _{\substack {1 \leq i \leq n \\ k_i  \leq -1}} 2(|k_i|  -1),
\end{equation}   
where $k_i$ are the \emph{weights} of the linearized action of $\gamma$ on
$T_{x_{\max}} M$, the tangent space to the maximum: $x_{\max}$, of the
generating function $H$ of $\gamma$. This is a single point  since $H$ is Morse
and the level sets of $H$ must be connected by the Atiyah-Guillemin-Sternberg
convexity theorem. To define these weights one takes an $S^1$ equivariant
orientation preserving identification of $T _{x_{\max}}$ with $ \mathbb{C} ^{n}$, which
 splits into $\gamma$ invariant 1 complex dimensional subspaces $N _{k_i}$, on which
$\gamma$ is acting by \begin{equation} \label {eq.weights} v \mapsto e ^{2\pi ik_i \theta}v.
\end{equation}
These $k_i$ are then defined to be the weights of the circle action $\gamma$. 
Our conventions are
\begin{align} \omega (X _{H}, \cdot) = -dH (\cdot)\\  
\omega (\cdot, J \cdot)>0.
\end{align} 
With these conventions the above weights are negative.  
Let $L ^{+} (\gamma)$ denote the positive Hofer length of $\gamma$, see Section
\ref{sec.hofer}. The following is our main main technical result proved in 
in Section \ref{section.hoferandpsi}:
 \begin{theorem}
\label{theorem.main} Let $h: B _{\gamma} \to \ls$ be Morse at $\gamma$ and
such that $I ^{virt} (\gamma)= \dim B _{\gamma}$, then  \begin {equation} 0 \neq
\Psi (h) =[\pm pt] \cdot e ^{iL ^{+} (\gamma)} + \text {corrections} \quad \in QH
(M).
\end {equation} 
\end{theorem}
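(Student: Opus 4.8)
The plan is to build a coupling form on $P_h$ so finely adapted to the Morse structure of $h$ at $\gamma$ that the moduli space of vertical holomorphic curves of maximal $\energy$ collapses to a single, transversally cut-out point, and then to read off its contribution to $\Psi(h)$. First I would construct the coupling form. Using that $h$ is Morse at $\gamma$, I pick a Hamiltonian connection on $M \hookrightarrow P_h \to B_\gamma \times S^2$ whose fiberwise coupling form over a point $b$ assigns to any section a c-energy bounded above by $L^+(h(b))$, with the extremal value $L^+(\gamma)$ attainable only over $\max$, where $L^+\circ h$ has its strict maximum; this rests on the standard relation between the area of the Hamiltonian fibration over $S^2$ associated to a loop $\ell\in\ls$ and $L^+(\ell)$. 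Over the fiber $X_{\max}$, which is the Hamiltonian fibration of the one-parameter subgroup $\gamma$, I take the $S^1$-invariant connection and an $S^1$-invariant $\omega$-compatible complex structure; then the section $\sigma_{\max}$ through the unique fixed point $x_{\max}$ (the maximum of $H$) is flat, hence $J^h$-holomorphic, and realizes the extremal c-energy $L^+(\gamma)$.

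Next comes the moduli analysis. For the induced family $\{J^h_b\}$, an $\energy$/area estimate forces every vertical $J^h$-holomorphic curve of maximal c-energy to lie in the fiber over $\max$; inside $X_{\max}$, $S^1$-invariance together with extremality of the energy forces it to be $\sigma_{\max}$ itself, since any other section, or any section-plus-bubble configuration, strictly loses c-energy and the only flat section in the relevant class runs through $x_{\max}$. Thus the top-$\energy$ stratum of the moduli space is the single point $\sigma_{\max}$ sitting over $\max$. The index bookkeeping then reads: the fiberwise linearized Cauchy–Riemann operator $D_{\sigma_{\max}}$ has $\dim\coker D_{\sigma_{\max}} = I^{virt}(\gamma)$ by \eqref{eq.virtualindex}, while the hypothesis $\dim B_\gamma = I^{virt}(\gamma)$ makes the full linearization --- fiber directions together with the $B_\gamma$-directions entering through $dh_{\max}$ --- Fredholm of index $0$. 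The key point is that $dh_{\max}$, combined with the Morse-nondegeneracy of $L^+\circ h$ at $\max$, identifies $T_{\max}B_\gamma$ with a complement of $\im D_{\sigma_{\max}}$, so the full operator is surjective and $\sigma_{\max}$ is a regular point, counted with a well-defined sign $\pm 1$. One must also arrange the virtual perturbation handling the lower-$\energy$ strata so that it leaves this top stratum untouched.

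Finally I would identify the output. Evaluating at the marked point gives $\sigma_{\max}(\cdot)=x_{\max}$, whose homology class is $[pt]$, and the c-energy of $\sigma_{\max}$ equals $L^+(\gamma)$, which is recorded by the Novikov weight $e^{iL^+(\gamma)}$ in $QH(M)$. Hence the top-filtration term of $\Psi(h)$ is $[\pm pt]\cdot e^{iL^+(\gamma)}$; all remaining contributions come from curves of strictly smaller c-energy, hence strictly lower filtration, and constitute the ``corrections''. Since the leading term is nonzero in $QH(M)$ and cannot be cancelled by terms of strictly lower filtration, $\Psi(h)\neq 0$.

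I expect the main obstacle to be the regularity/persistence step: proving that the single flat section over $\max$ is cut out transversally by the full problem, i.e. that $dh_{\max}$ really does span a complement of $\im D_{\sigma_{\max}}$. This is where the Morse hypothesis on $h$ at $\gamma$ and the dimension equality $\dim B_\gamma = I^{virt}(\gamma)$ must be used together: heuristically, if $\dim B_\gamma$ were any smaller, $h$ could be homotoped below the level $L^+(\gamma)$ and this part of $\Psi$ would be forced to vanish (the argument alluded to in the proof of Theorem \ref{theorem.index}), so the dimension equality is precisely the non-degeneracy that lets the count survive.
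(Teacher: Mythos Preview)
Your overall architecture matches the paper's: localize the top-$\energy$ moduli space to the single flat section $\sigma_{\max}$ over $\max$ (this is Proposition~\ref{lemma.max.energy} and the Seidel-type argument preceding it), check that its contribution is $[\pm pt]\cdot e^{iL^+(\gamma)}$, and observe that lower-$\energy$ classes cannot interfere. Where you diverge from the paper, and where there is a genuine gap, is precisely the step you yourself flag as the main obstacle.

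You assert that the Morse nondegeneracy of $L^+\circ h$ at $\max$ forces $dh_{\max}$ to carry $T_{\max}B_\gamma$ isomorphically onto a complement of $\im D_{\sigma_{\max}}$, so that the unperturbed parametrized operator is already surjective. The paper does \emph{not} claim this, and no argument is given here that the Hessian of $L^+$---a statement about second variation of area---controls the image of the $B_\gamma$-variation inside $\coker D_{\sigma_{\max}}$. The heuristic you offer (``if $\dim B_\gamma$ were smaller, $h$ could be pushed below the level and the count would vanish'') constrains the invariant $\Psi$, not the regularity of any particular family $\{J^h_b\}$; a moduli space can have the correct expected dimension $0$ and still fail to be cut out transversally.

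The paper instead sidesteps this entirely via Lemma~\ref{lemma.perturbation}: one perturbs the family of coupling forms $\{\widetilde\Omega^h_b\}$ to a nearby family $\{\widetilde\Omega'_b\}$ in the restricted class $C$, \emph{keeping $\widetilde\Omega'_{\max}=\widetilde\Omega^h_{\max}$ fixed}, so that the induced $\{J'_b\}$ is regular for the class $A_{\max}$. Two ingredients make this work. First, the fiberwise operator $D\mathcal F_h|_{T_{u_{\max}}\mathcal B_{\max}}$ is \emph{injective} (the vertical normal bundle to $\sigma_{\max}$ has all Chern numbers negative, which is where the Morse hypothesis on $H$ enters), so one may perturb away from $\mathcal B_{\max}$ without creating new zeros there. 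Second, the universal linearization \eqref{eq.onto} over $C_{\max}$ is surjective, so the needed abstract perturbation $\mathcal V$ of the section $\mathcal F_h$ can be realized by an honest perturbation of coupling forms. One then checks (Lemma~\ref{lemma.prop.ofarea} and the discussion around \eqref{eq.area}) that for $\{\widetilde\Omega'_b\}$ sufficiently close to $\{\widetilde\Omega^h_b\}$ the area function $b\mapsto\area(\widetilde\Omega'_b)$ still has its unique Morse maximum at $\max$, so Proposition~\ref{lemma.max.energy} still applies and the perturbed moduli space is again the single point $\sigma_{\max}$, now regular by construction. This perturbation step is the substantive content your sketch is missing.
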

In \cite{GS} we also studied what we called the max length measure of $h:B \to
\ls$ which is defined by 
\begin{equation} \label {eq.max.length} L^+(h) \equiv \max _{b \in B} L ^{+}
( h (b)).
\end{equation}
 \begin{corollary} \label{corollary.max.length} Let $h: B _{\gamma} \to \ls$
be as in Theorem \ref{theorem.main}, then the cycle $h: B _{\gamma} \to \Omega \text
{Ham}(M, \omega)$ does not vanish in rational homology and moreover minimizes
the max length measure in its homology class.
\end {corollary}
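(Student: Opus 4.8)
The plan is to deduce both assertions from Theorem \ref{theorem.main} by a direct functoriality argument on the invariant $\Psi$. First I would observe that $\Psi$ is a homological invariant: since it is defined by counting vertical holomorphic sections of the bundle $p: P_h \to B$ with constraints, a bordism $H: B' \times [0,1] \to \ls$ between cycles $h$ and $h'$ produces a bordism of the corresponding parametrized moduli spaces, so $\Psi(h)$ depends only on the class $[h] \in H_*(\ls, \mathbb{Q})$. (This is the content of the construction in Section \ref{section.prelims} together with \cite{GS}.) Consequently, if $[h_{\gamma}] = 0$ in $H_*(\ls, \mathbb{Q})$, then $\Psi(h_{\gamma}) = 0$; but Theorem \ref{theorem.main} gives $\Psi(h_{\gamma}) \neq 0$, a contradiction. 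Hence $h_{\gamma}$ does not vanish in rational homology.

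For the second assertion, suppose $h': B' \to \ls$ is any cycle homologous to $h_{\gamma}$, and suppose for contradiction that $L^+(h') < L^+(h_{\gamma}) = L^+(\gamma)$. By homological invariance of $\Psi$ we have $\Psi(h') = \Psi(h_{\gamma})$. Now I would analyze the $\energy$ filtration: the quantum homology class $\Psi(h')$ decomposes over section classes $A$, and the term coming from curves in a given class contributes with a weight $e^{i \ell}$ where $\ell$ is controlled from above by the max length measure $L^+(h')$ — this is the mechanism alluded to in the passage ``the part of the invariant $\Psi$ corresponding to curves of the same class as $\sigma_{\max}$ has to vanish,'' and is to be made precise exactly as in the proof of Theorem \ref{theorem.index}. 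Concretely, a coupling form on $P_{h'}$ adapted to $h'$ has $\energy$ bounded by $L^+(h')$, so no vertical holomorphic section of the maximal energy level $L^+(\gamma)$ can exist over any point of $B'$; therefore the coefficient of $e^{iL^+(\gamma)}[\pm pt]$ in $\Psi(h')$ vanishes. But Theorem \ref{theorem.main} asserts this coefficient is nonzero in $\Psi(h_{\gamma}) = \Psi(h')$ — contradiction. Hence $L^+(h') \geq L^+(\gamma) = L^+(h_{\gamma})$, i.e. $h_{\gamma}$ minimizes the max length measure in its homology class.

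I expect the main obstacle to be the precise bookkeeping in the energy/action filtration argument: one must verify that the ``$+ \text{corrections}$'' term in Theorem \ref{theorem.main} genuinely consists of contributions of strictly smaller $\energy$ (equivalently, strictly smaller exponent in the $e^{i\ell}$ weighting), so that the $e^{iL^+(\gamma)}$-term is the unique top-filtration term and cannot be cancelled by corrections. This requires that the coupling form construction on $P_h$ be set up so that all section classes other than $[\sigma_{\max}]$ have strictly smaller coupling-form area, which is where the hypothesis that $h$ is Morse at $\gamma$ (and the localization of maximal-energy curves over $\max$) is used. Granting the filtration statement implicit in Theorem \ref{theorem.main} and its proof, the rest of the argument is the soft functoriality sketched above.
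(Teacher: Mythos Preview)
Your proposal is correct and follows essentially the same route as the paper: homological invariance of $\Psi$ gives the first part immediately, and for the second part you invoke the energy inequality (Lemma \ref{corollary.lower.bound}) to show that any homologous cycle $h'$ with $L^+(h') < L^+(\gamma)$ supports no vertical holomorphic sections of $\energy = H_{\max}$, killing the leading term of $\Psi(h')=\Psi(h)$ and contradicting Theorem \ref{theorem.main}. Your worry about the corrections is already settled in the proof of Theorem \ref{theorem.main}, where it is noted that the correction terms lie in strictly lower $\energy$ classes and are linearly independent of the leading term.
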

When $\gamma$ is generated by a Morse Hamiltonian $H$, $\gamma$ is a smooth point of $L ^{+}$ by
Ustilovsky's work \cite{U} and we can make the following definition.
 \begin{definition} \label{definition.hofer.index} Let
$\gamma$ be a one parameter subgroup of $\text {Ham}(M, \omega)$, where $(M, \omega)$ is any closed symplectic manifold. We define the
Hofer index $I ^{H} (\gamma)$ to be the maximal dimensional subspace of
$T_{\gamma} \ls$ on which the Hessian of $L ^{+}$ is negative definite. 
\end{definition}
Of course the above index can apriori be infinite. On the other hand we have:
\begin{theorem}  \label{theorem.index}  Let $h: B _{\gamma} \to \ls$ be as in
Theorem \ref{theorem.main} then $$I ^{H} (\gamma)= I ^{virt} (\gamma)=\dim B _{\gamma}.$$
\begin{conjecture} \label{conjecture.index} Let $\gamma$ be a Hamiltonian circle
action on $(M, \omega)$ generated by a Morse Hamiltonian, then 
\begin {equation} I ^{H} (\gamma)= I ^{virt} (\gamma).
\end {equation}  
\end{conjecture}
\end{theorem}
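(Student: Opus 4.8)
The plan is to reduce to showing $I ^{H} (\gamma) = \dim B _{\gamma}$ --- the other equality $I ^{virt} (\gamma) = \dim B _{\gamma}$ is already assumed in Theorem \ref{theorem.main} --- and to prove this by two inequalities. The bound $I ^{H} (\gamma) \ge \dim B _{\gamma}$ would come directly from $h$ being Morse at $\gamma$: by definition the pullback $h ^{*} L ^{+}$ has a nondegenerate maximum at $\max$, so its Hessian there is negative definite, hence nondegenerate; since $\gamma = h (\max)$ is a critical point of $L ^{+}$ by Ustilovsky \cite{U}, one has at $\max$ the chain-rule identity $\operatorname{Hess} _{\max} (h ^{*} L ^{+}) (u, v) = \operatorname{Hess} _{\gamma} (L ^{+}) (dh _{\max} u, dh _{\max} v)$, the correction term vanishing because $d L ^{+} _{\gamma} = 0$. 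Nondegeneracy of the left side forces $dh _{\max}$ to be injective, so $\operatorname{Hess} _{\gamma} (L ^{+})$ restricts to a negative definite form on the $(\dim B _{\gamma})$-dimensional subspace $\im dh _{\max} \subset T _{\gamma} \ls$, whence $I ^{H} (\gamma) \ge \dim B _{\gamma}$.

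For the reverse inequality I would argue by contradiction: suppose $I ^{H} (\gamma) \ge \dim B _{\gamma} + 1$. A standard fact about symmetric forms --- if $Q$ is negative definite on a subspace $U$ of dimension $k$ and admits a negative definite subspace of dimension $\ge k + 1$, then $U$ is contained in a negative definite subspace of dimension $k + 1$ (restrict $Q$ to $U$ together with the larger subspace, a finite-dimensional quadratic space in which $U$ sits strictly below the index) --- produces $w \in T _{\gamma} \ls$ for which $\operatorname{Hess} _{\gamma} (L ^{+})$ is negative definite on $\im dh _{\max} \oplus \mathbb{R} w$. I would then perturb $h$ only near $\max$: choose a bump function $\beta$ supported in a small coordinate ball about $\max$ and equal to $1$ near $\max$, extend $w$ to a vector field $w (\cdot)$ along $h$, and set $h' _{t} (b) = \exp _{h (b)} (t \beta (b) \epsilon\, w (b))$, so $h' := h' _{1}$ is homotopic to $h = h' _{0}$. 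A second-order Taylor expansion of $L ^{+}$ at $\gamma$ --- legitimate given Ustilovsky's smoothness and his analysis of the second variation --- shows that, for $\epsilon$ and the ball small, the tangent vectors produced by the perturbation lie to leading order in $\im dh _{\max} \oplus \mathbb{R} w$, where $Q$ is negative definite, so $L ^{+} (h' (b)) < L ^{+} (\gamma)$ for all $b$ in the support of $\beta$; off that support $h'$ agrees with $h$, and since $\max$ is by hypothesis the \emph{unique} maximum of $h ^{*} L ^{+}$ and $B _{\gamma}$ is compact, $h ^{*} L ^{+}$ is bounded strictly below $L ^{+} (\gamma)$ away from a neighborhood of $\max$. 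Hence $L ^{+} (h') = \max _{b} L ^{+} (h' (b)) < L ^{+} (\gamma) = L ^{+} (h)$.

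Since $h'$ is homotopic, hence homologous, to $h$, this contradicts Corollary \ref{corollary.max.length}, by which $h$ minimizes the max length measure \eqref{eq.max.length} in its homology class. (Equivalently one may argue with $\Psi$ directly: $\Psi (h') = \Psi (h)$ because $\Psi$ factors through rational homology, while once $h$ has been pushed strictly below the level $L ^{+} (\gamma)$ the contribution of the maximal-$\energy$ section class $[\sigma _{\max}]$ --- the $e ^{i L ^{+} (\gamma)}$ term in Theorem \ref{theorem.main} --- must vanish, contradicting $\Psi (h) \neq 0$.) This gives $I ^{H} (\gamma) \le \dim B _{\gamma}$, and combined with the first inequality and the hypothesis $I ^{virt} (\gamma) = \dim B _{\gamma}$ we obtain $I ^{H} (\gamma) = I ^{virt} (\gamma) = \dim B _{\gamma}$. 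The main obstacle will be the perturbation step: one must control $L ^{+}$ to second order on a genuine neighborhood of $\gamma$ in $\ls$ and verify that composing the two exponential maps introduces no first-order drift capable of destroying the strict inequality (the radii of the $\beta \equiv 1$ region, of its transition annulus, and of the chart must be shrunk in the right order after $\epsilon$ is fixed) --- this is exactly where Ustilovsky's study of the Hofer functional near a circle subgroup generated by a Morse Hamiltonian is indispensable. The linear-algebra extension and the homotopy invariance of $\Psi$ are routine by comparison.
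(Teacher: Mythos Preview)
Your argument is correct and follows the same overall strategy as the paper: the inequality $I^{H}(\gamma)\geq\dim B_{\gamma}$ is immediate from the Morse-at-$\gamma$ hypothesis, and the reverse inequality is obtained by contradiction---assuming an extra negative direction, one homotops $h$ strictly below the level $L^{+}(\gamma)$, which kills the $A_{\max}$ contribution to $\Psi(h)$ (equivalently, contradicts Corollary~\ref{corollary.max.length}).

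The one genuine difference is in how the homotopy $h\rightsquigarrow h'$ is built. You perturb $h$ directly near $\max$ by a bump-cut-off push in the extra negative direction $w$, and then must carry out a second-order Taylor estimate to see that the displacement $dh_{\max}(v)+\beta(b)\epsilon\,w$ stays in the negative cone of the Hessian with errors controlled; you correctly flag the shrinking of radii as the delicate point. The paper instead first homotops $h$ to a map $\widetilde h$ which is \emph{constant} equal to $\gamma$ on a coordinate disk $D^{m}\subset B_{\gamma}$, and then observes that since the negative subspace $N$ has dimension $m+1$, the boundary sphere $h_{*}(S^{m-1})\subset N\setminus\{0\}\simeq S^{m}$ bounds a map $p:D^{m}\to N\setminus\{0\}$; exponentiating $\widetilde h$ along (an extension of) $p$ for small time then pushes every point of the disk off $\gamma$ by a nonzero vector in $N$, so the strict decrease on the disk is immediate from negative definiteness, with no competing first-order drift to estimate. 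Your route is more analytic, the paper's more topological; the paper's flattening trick trades the Taylor bookkeeping you worry about for the elementary fact $\pi_{m-1}(S^{m})=0$. Either way the conclusion is the same.
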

Yael Karshon and Jennifer Slimowitz~\cite{Yael} have recently verified that 
$I ^{H} (\gamma) \geq I ^{virt} (\gamma)$. They explicitly construct a local
family deforming $\gamma$ of dimension $I ^{virt} (\gamma)$,  so that the
Hessian of $L ^{+}$ on the tangent space to this family is negative definite.
Following a suggestion of Leonid Polterovich I know expect to be able to prove
this conjecture, using classical calculus of variations and Duistermaat's
 theorem relating Morse index and Maslov index, \cite{Duistermaat}.
\subsection {Hofer functional and ``virtual Morse theory''}
Given the conjecture above, Corollary \ref{corollary.max.length} can be
restated with Morse index of $\gamma$ replacing virtual index of $\gamma$. 
However, this now raises an  observation. Since the Morse indexes
are even,  the first part of the statement of the corollary would hold
automatically if Hofer length functional had well behaved negative gradient flow
and an associated Morse-Bott complex computing homology of $\ls$. 
Nothing like this could remotely be true directly as the Hofer length
functional is extremely degenerate and poorly behaved analytically.
 Yet from
the point of view of Corollary \ref{corollary.max.length} something like this
happens virtually.

\subsection {Generalizations to path spaces}
All of the results outlined in this section have appropriate generalizations to
path spaces. For example Theorem \ref{theorem.main} can be stated for cycles in
path space $\Omega _{\phi_1, \phi_2}$ Morse at $\gamma$, where $\gamma$
is an autonomous geodesic (generated by Morse $H$) between
non-conjugate $\phi_1, \phi_2 \in \text {Ham}(M, \omega)$. Here non-conjugate is in Hofer geometry sense, which
amounts to the condition that the linearized flow at the maximizer of $H$ has no
periodic orbits with period less than $1$.  In this case one must use Floer homology instead of quantum
homology, and the role of fibrations $M \hookrightarrow X _{b} \to S
^{2}$ is played by fibrations $$M \hookrightarrow X _{b} \to \mathbb{R} \times S
^{1}$$ with asymptotic ($r \mapsto \pm \infty$, with $r, \theta$ coordinates on
$\mathbb{R} \times S ^{1}$) boundary monodromy maps $\phi_1, \phi_2$, but
otherwise the statements and proofs are completely analogous. It is even likely that one can extend from autonomous geodesics to any geodesics
of the Hofer length functional, (which are generated by quasi-autonomous time
dependent Hamiltonians, see \cite{U} and \cite{ML1}). However in this case the
proofs may need to slightly change, since we give up some symmetry.
\subsection {A special case: $\Omega \text {Ham}(G/T)$}
\label{section.special.case} Consider the Hamiltonian action of $G$ on $G/T$. 
In Section \ref{section.energy.flow} we relate the ``Morse theory'' for the
functional $L ^{+}: \Omega G \to \mathbb{R}$ pulled back from
$\Omega \text {Ham}(G/T)$ and the energy functional $E: \Omega G \to \mathbb{R}$
induced by a bi-invariant metric on $G$. The latter functional is amazingly
well behaved. It is Morse-Bott, the ``smooth'' negative gradient flow (i.e.
energy flow) exists for all time and the unstable manifolds of critical level
sets are complex submanifolds. (These appear to be rather deep facts of life, see \cite{Press} and or
\cite{Segal}.)
    
Let $f: \Omega G \to \Omega \text {Ham}(G/T)$ be the map induced by the
Hamiltonian action. The first theorem follows
from Corollaries \ref{corollary.ultimate}, \ref{corollary.max.length}.
\begin{theorem} \label {theorem.application} Let $G$ be a semi simple Lie group,
$\gamma$ an $S^1$ subgroup of $G$ whose centralizer is the torus, and $h:B
_{\gamma} \to G$ the pseudocycle corresponding to the unstable manifold of $\gamma$ in $\Omega G$
for the  Riemannian energy functional.  Then the pseudocycle $f \circ h: B
_{\gamma} \to \Omega \text {Ham}(G/T)$ is non-vanishing in $H _{\dim B
_{\gamma}} (\Omega \text {Ham}(G/T), \mathbb{Q})$ and moreover it minimizes the max-length measure in its
homology class, (see eq. \eqref{eq.max.length}).
\end{theorem}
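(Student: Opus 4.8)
The plan is to reduce the statement to Corollary \ref{corollary.max.length} (equivalently, to the pair Corollaries \ref{corollary.ultimate}, \ref{corollary.max.length}) by checking its hypotheses for the cycle $f \circ h$. The first step is to recall, from the cited structure of the Riemannian energy functional $E: \Omega G \to \mathbb{R}$ (see \cite{Press}, \cite{Segal}), that for a semisimple $G$ and an $S^1$-subgroup $\gamma$ with centralizer the maximal torus $T$, the functional $E$ is Morse--Bott, its negative gradient (energy) flow exists for all time, and the unstable set of the critical orbit through $\gamma$ is a finite-dimensional complex submanifold $B_\gamma$; this gives the pseudocycle $h: B_\gamma \to \Omega G$. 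The second step is to verify that $f \circ h$ is \emph{Morse at $\gamma$} in the sense of the Definition preceding Theorem \ref{theorem.main}: one shows that the pullback of $L^+$ along $f \circ h$ attains its maximum at the point $\max \in B_\gamma$ corresponding to the critical orbit, that $h(\max)$ maps to $\gamma$ under $f$, and that $L^+$ is Morse there. The last point is exactly Ustilovsky's theorem \cite{U}, since $\gamma$ is generated by a Morse Hamiltonian (the moment map component of the $G$-action on $G/T$ is Morse, its critical points being the $T$-fixed points of $G/T$, which form isolated points by the Atiyah--Guillemin--Sternberg picture, with the relevant one being the maximizer $x_{\max}$). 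The comparison of the two "Morse-theoretic pictures" is precisely the content of Section \ref{section.energy.flow}, so this step is to be imported from there.

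The third step is the index matching: I must check that $\dim B_\gamma = I^{virt}(\gamma)$, so that Theorem \ref{theorem.main} (and hence Corollary \ref{corollary.max.length}) applies. Here one uses the formula \eqref{eq.virtualindex}, $I^{virt}(\gamma) = \sum_{k_i \leq -1} 2(|k_i| - 1)$, where the $k_i$ are the weights of the linearized $\gamma$-action on $T_{x_{\max}}(G/T)$. On the other hand, the complex dimension of the unstable manifold of $\gamma$ in $\Omega G$ for the energy functional is, by the classical Morse theory of $\Omega G$ (Bott, and the loop-group description in \cite{Press}), expressible in terms of the same weight data — it is a count of positive roots / affine-root contributions determined by $\gamma$ — and a direct bookkeeping identifies $2 \dim_{\mathbb{C}} B_\gamma$ with the right-hand side of \eqref{eq.virtualindex}. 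This weight/root dictionary, translating the $T$-weights on $T_{eT}(G/T)$ (the roots not in $T$) into the $k_i$ appearing in the $S^1$-reduction along $\gamma$, is the computational heart of the argument.

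Having verified the hypotheses, the conclusion is immediate: Corollary \ref{corollary.max.length} gives that $f \circ h: B_\gamma \to \Omega \mathrm{Ham}(G/T, \omega)$ is non-vanishing in $H_{\dim B_\gamma}(\Omega \mathrm{Ham}(G/T), \mathbb{Q})$ and minimizes the max-length measure \eqref{eq.max.length} in its homology class. I expect the main obstacle to be the index computation of the third step: one must be careful that the $S^1$-equivariant identification of $T_{x_{\max}}(G/T)$ with $\mathbb{C}^n$ used to define the weights $k_i$ in \eqref{eq.weights} is compatible with the root-space decomposition that governs $\dim B_\gamma$, including sign conventions (our $\omega(X_H, \cdot) = -dH(\cdot)$ forces the weights negative) and the shift by $1$ in $2(|k_i| - 1)$ versus the naive dimension count; a secondary subtlety is confirming that $f \circ h$ is genuinely a pseudocycle (its boundary has codimension $\geq 2$), which follows from $B_\gamma$ being a complex manifold with the energy flow providing a compactification whose lower strata correspond to lower critical orbits.
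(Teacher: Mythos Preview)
Your plan is correct and matches the paper's own argument: reduce to Corollary~\ref{corollary.max.length} by verifying, via Theorem~\ref{lemma.normalized} (i.e.\ Corollary~\ref{corollary.ultimate}), that $f\circ h$ is Morse at $\gamma$ and that $\dim B_\gamma = I^{\mathrm{virt}}(f(\gamma))$, the latter being the root/weight bookkeeping you describe.

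One correction to Step~2: the statement ``$L^+$ is Morse at $\max$'' in the definition refers to the \emph{pullback} $(L^+)\circ(f\circ h):B_\gamma\to\mathbb{R}$ having a nondegenerate maximum, and this is \emph{not} what Ustilovsky's theorem gives (Ustilovsky only says $L^+$ is smooth on $\Omega\mathrm{Ham}$ at $\gamma$ and that $\gamma$ is critical). The actual mechanism, which you correctly point to in Section~\ref{section.energy.flow} but misattribute, is the chain of inequalities $L^+(\gamma_b)\leq L(\gamma_b)$ from \eqref{eq.norm} and $L(\gamma_b)^2\leq E(\gamma_b)$ from Cauchy--Schwarz, both with equality at $\gamma$; since $E|_{B_\gamma}$ is Morse at $\gamma$ by construction of the unstable manifold, the squeeze forces the Hessian of $L^+|_{B_\gamma}$ to be negative definite. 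Replace the Ustilovsky citation there with this argument and your outline is complete.
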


Theorem \ref{theorem.index} together with Corollary \ref{corollary.ultimate}
gives: \begin{theorem} Let $\gamma$ be as in the above theorem, then $I ^{H} (f
\circ \gamma)$ is the Riemannian index of $\gamma$, i.e. the index of the
geodesic $\gamma$ as a critical point of the Riemannian energy functional on
$\Omega G$.
\end{theorem}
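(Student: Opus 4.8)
The plan is to obtain this as a formal consequence of Theorem~\ref{theorem.index}, applied not to $\gamma$ directly but to the composite pseudocycle $f\circ h\colon B_\gamma\to\Omega\text{Ham}(G/T)$, together with the classical identification of $\dim B_\gamma$ with the Riemannian index. First I would recall the Morse--Bott picture of the energy functional $E\colon\Omega G\to\mathbb{R}$ of a bi-invariant metric on (the compact form of) $G$, as surveyed in \cite{Press,Segal}: the energy flow exists for all time, the critical manifolds are conjugacy classes of one-parameter subgroups, and the unstable manifold $B_\gamma$ of $\gamma$ is a complex submanifold whose real dimension equals the Morse index of the geodesic $\gamma$. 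This last number is the ``Riemannian index'' of the statement, so the theorem is equivalent to $I^{H}(f\circ\gamma)=\dim B_\gamma$.

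Next I would invoke Corollary~\ref{corollary.ultimate}, which provides precisely that $f\circ h\colon B_\gamma\to\Omega\text{Ham}(G/T)$ is Morse at $f\circ\gamma$ and that $I^{virt}(f\circ\gamma)=\dim B_\gamma$; thus $f\circ h$ satisfies the hypotheses of Theorem~\ref{theorem.main}. Applying Theorem~\ref{theorem.index} to the cycle $f\circ h$, with the critical circle $f\circ\gamma$ playing the role of $\gamma$ there (note $f$ is the homomorphism of loop groups induced by the Hamiltonian action $G\to\text{Ham}(G/T)$), gives $I^{H}(f\circ\gamma)=I^{virt}(f\circ\gamma)=\dim B_\gamma$. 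Combining with the previous paragraph yields the theorem. At this level the argument is pure bookkeeping: one only has to feed the right cycle into Theorem~\ref{theorem.index}.

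The real content, hence the step I expect to be the main obstacle, sits inside Corollary~\ref{corollary.ultimate}. One must verify that the energy-flow unstable manifold of $\gamma$, pushed forward by $f$, is genuinely Morse at $f\circ\gamma$ for the positive Hofer functional --- which is the comparison of $L^+\colon\Omega\text{Ham}(G/T)\to\mathbb{R}$ with $E\colon\Omega G\to\mathbb{R}$ set up in Section~\ref{section.energy.flow} --- and, more delicately, that the virtual index $I^{virt}(f\circ\gamma)$ computed from \eqref{eq.virtualindex} agrees with Bott's Morse index of the geodesic $\gamma$. This is a root-theoretic matching: the isotropy representation of the torus at the moment-map maximizer $x_{\max}\in G/T$ splits into the positive root spaces $\mathfrak g_\alpha$, so that the weights $k_i$ of \eqref{eq.weights} are, up to the sign convention that makes them negative, the pairings $\langle\alpha,\xi\rangle$ with $\xi$ the generator of $\gamma$; the eigenvalues of the Jacobi operator along $t\mapsto\exp(t\xi)$ are controlled by the same pairings. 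One then checks that $\sum_{k_i\le-1}2(|k_i|-1)$ equals Bott's index term by term, and that the point selected by the energy flow is indeed $x_{\max}$. Once these identifications are in hand, the chain above closes and the theorem follows.
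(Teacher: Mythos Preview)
Your proposal is correct and matches the paper's own argument exactly: the theorem is stated as an immediate consequence of Theorem~\ref{theorem.index} applied to the cycle $f\circ h$, with Corollary~\ref{corollary.ultimate} supplying the hypotheses, and you correctly locate the substantive work (the Morse property of $L^{+}$ on $B_\gamma$ and the root-theoretic identification $I^{virt}(f\circ\gamma)=\dim B_\gamma$) inside Theorem~\ref{lemma.normalized}, which underlies Corollary~\ref{corollary.ultimate}. Your sketch of that root-theoretic matching via the weights $\langle\alpha,\xi\rangle$ and Bott's index formula is also the argument the paper gives.
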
 
 Alexander Givental asked me the following natural question:  
 \begin{question} Does the first part of Theorem \ref{theorem.application}
 remain true if $\Omega \text {Ham}(G/T)$ is replaced with $\Omega {\Diff}(G/T)$?
 \end{question}
 My feeling is that the answer is no, however not much is known about
topology of diffeomorphism groups of higher dimensional manifolds.
\subsection*
{Acknowledgements} This is part of the author's doctoral research at Stony Brook university. I would like
to thank my advisor Dusa McDuff and Aleksey Zinger for numerous
suggestions and discussions. Aleksey Zinger in particular for noticing a
serious issue in an earlier version of the manuscript. As well as Michael Entov
and Yael Karshon for helping me out with some properties of normalized Hamiltonian functions.
\section {Preliminaries and the map $\Psi$} \label{section.prelims}
\subsection {The group of Hamiltonian symplectomorphisms and Hofer metric}
\label{sec.hofer} Given a smooth function $H _{t}: M
\to \mathbb{R}$, $0 \leq t\leq 1$,  there is  an associated time dependent
Hamiltonian vector field $X _{t}$, $0 \leq t \leq 1$,  defined by 
\begin{equation}  \label {equation.ham.flow} \omega (X _{t}, \cdot)=-dH _{t} (
\cdot).
\end{equation}
The vector field $X _{t}$ generates a path $\gamma_{t}$, $0 \leq t \leq1$, in
$\text
{Diff}(M, \omega)$. Given such a path $\gamma_t$, its end point $\gamma_1$ is
called a
Hamiltonian symplectomorphism. The space of Hamiltonian symplectomorphisms
forms a group, denoted by $\text {Ham}(M, \omega)$.

In particular the path $\gamma_t$ above lies in $ \text {Ham}(M, \omega)$. It
is well-known 
that any smooth path $\{\gamma _{t} \}$ in $\text {Ham}(M, \omega)$ with
$\gamma_0=id$ arises in this way (is generated by $H_t: M \to \mathbb{R}$). Given such a path
$\{\gamma_t\}$, the \emph{Hofer length}, $L (\gamma _{t})$ is defined by
\begin{equation*}L (\gamma_{t}):= \int_0 ^{1} \max (H ^{\gamma} _{t}) -\min (H
^{\gamma}_t) dt,
\end{equation*}
where $H ^{\gamma} _{t}$ is a generating function for the path
$ \gamma_{0} ^{-1} \gamma _{t},$ $0\leq t \leq 1$.
The Hofer distance $\rho (\phi, \psi)$ is  defined by taking the
infimum of the Hofer length of paths from $\phi $ to $\psi$.
It is a deep theorem that the resulting metric is non-degenerate, (cf.
\cite{H, LM}). This gives $ \text {Ham}(M, \omega)$ the structure of a Finsler
manifold. We will be more concerned with a related measure of the path,
\begin{equation} \label {eq.pos.length}L ^{+} (\gamma_{t}):= \int_0 ^{1} \max (H
^{\gamma} _{t}), \end{equation} where $H ^{\gamma} _{t}$ is in addition normalized by the
condition 
\begin{equation*} \int _{M} H ^{\gamma _{t}}=0.
\end{equation*}

\subsection {Quantum Homology} \label{section.quantum.homology}
For a monotone symplectic manifold $(M, \omega)$ we set $QH (M) = H_* (M,
\mathbb{C})$. For us this is an ungraded vector space with a special product
called quantum product. For
integral generators $a,b \in H_* (M)$, this is the product defined by
\begin{equation} a*b = \sum _{A \in H_2 (M)} b_A e ^{-i \omega (A)},
\end{equation}
where $b _{A}$ is the homology class of the evaluation pseudocycle
from the pointed moduli space of
$J$-holomorphic $A$-curves intersecting generic pseudocycles representing
$a,b$,  for a generic $\omega$ tamed $J$. This sum is finite in the monotone
case: $\omega = k c_1 (TM)$, with $k>0$. 
The product is then extended to $QH (M)$ by linearity. For more technical details
see \cite{MS}.


\subsection {Quantum characteristic classes} Here, we give a brief overview of
the construction of the map $$\Psi: H_* (\Omega \text {Ham}(M, \omega), 
\mathbb{Q}) \to QH  (M),$$ originally defined in \cite{GS} and which
is a natural generalization of Seidel representation. Let $h:
B \to \Omega \text {Ham}(M, \omega)$ be a smooth cycle (the associated map $h: B \times S^1 \to \text {Ham}(M, \omega)$ is smooth), where $B$ is a
closed oriented smooth manifold, 
and let $p: P _{h} \to B$ be as in equation \eqref{eq.definitionPh}. 

Fix a
family $ \{j _{b,z}\}$ of almost complex structures on $$M \hookrightarrow P
_{h} \to B \times S ^{2},$$ fiberwise compatible with $\omega$. Given a smooth
family $\{ \mathcal {A} _{b} \}$, $ \mathcal {A} _{b}$ is a Hamiltonian connection on $X _{b}= p
^{-1} (p)$ we have an induced family of complex structures $\{J ^{ \mathcal {A}} _{b}\}$
defined as follows.
\begin{itemize} 
  \item The natural map
$\pi: (X _{b}, J_b ^{A})
\to (S ^{2}, j)$ is holomorphic for each $b$.  
\item  $J ^{ \mathcal {A}} _{b}$ preserves the horizontal subbundle $Hor _{b}
^{ \mathcal {A}}$ of $TX _{b}$ induced by $ \mathcal {A}$.
\item $J ^{ \mathcal {A}} _{b}$ preserves the vertical tangent bundle of $M
\hookrightarrow P _{h} \to B \times S ^{2}$ and restricts to the family $ \{j _{b,z}\}$.
\end{itemize}
\begin{definition} A family $ \{J _{b}\}$ is called \emph {
\textbf{$\pi$-compatible}} if it is $ \{J ^{ \mathcal {A}} _{b}\}$ for some
connection $ \mathcal {A}$ as above.
\end{definition}
The importance of this condition is that it forces bubbling to happen in the
fibers of $M \hookrightarrow X_b \to S ^{2}$, where it is controlled by
monotonicity of $M, \omega$.
 \begin{remark} \label{remark.fano} For the most part we work with a
 fixed symplectic manifold $ (M, \omega)$, which we will assume to be
 monotone. Also, for the purpose of the following definition the family $ \{j
 _{b,z}\}$ is fixed. However, it will be helpful in Section
 \ref{section.energy.flow}  to vary the families $ \{j _{b,z}\}$, $ \{\omega
 _{b,z}\}$ on  $M \hookrightarrow P _{h} \to B \times S ^{2}$, so long as each
 fiber $(M _{b, z}, \omega _{b,z}, j _{b,z})$ is Fano, i.e. $c_1 (TM)$ is
 positive on $j _{b,z}$ holomorphic curves. This will be done not for any 
 compactness or regularity reasons, but for  other geometric reasons. This will
 vary the notion of a $\pi$-compatible family $ \{J _{b}\}$, but not the map $\Psi$ below.
\end{remark}
The map $\Psi$ we now define measures part of the degree of quantum self
intersection of a natural submanifold $B \times M \subset P _{f}$. The entire
quantum self intersection is captured by the total quantum class of $P _{f}$,
discussed in \cite{GS}. We define $\Psi$ as follows: 
\begin{equation} \label {eq.def.ch.class} \Psi ([B, f])= \sum_ { {A}
\in j_* (H_2 ^{sect} (X))} b _{{ {A}}}  \cdot e ^{- i\mathcal {{C}}(
{A})}. 
\end{equation} Here,
\begin{itemize}
    \item $H ^{sect}_2 (X)$ denotes the section homology classes of $X$.
    \item $ \mathcal {C}$ is the coupling class of Hamiltonian fibration $M
    \hookrightarrow P _{f} \to B \times S ^{2}$, see
    \cite [Section 3]{kedra-2005-9}. Its restriction  to the fibers $X \subset P
    _{f}$ is uniquely determined by the condition \begin{equation} \label {eq.calC} i^* ( \mathcal {C})= [\omega],
\hspace {15pt} \int_M { \mathcal {C}}^ {n+1}=0 \in H^2 (S ^{2}). 
\end{equation} 
where $i: M \to X$ is the inclusion of fiber map, and the integral above denotes
the integration along the fiber map for the fibration $\pi: X \to S^2$.     
     \item  The map $j_*: H ^{sect}_2(X) \to H_2(P_f)$ is
 induced by inclusion of fiber. 
  \item The coefficient $b _{ {A}} \in H_* (M)$ is defined by duality:
 \begin{equation*} b _{ \widetilde{A}} \cdot _{M} c = ev_0 \cdot _{B \times M} [B]
 \otimes c, 
 \end{equation*} where 
\begin{align*} ev_0: \mathcal {M} _{0} (P _{h}, \widetilde{A}, \{J _{b}\}) \to B
\times M \\ ev_0 (u, b) = (u (0), b) 
\end{align*}
denotes the evaluation map from the  space
\begin{equation}  \label {eq.moduli.space} \mathcal {M} (P _{f}, {A},
\{J _{b}\})
\end{equation}
  of pairs $ (u, b)$, $u$ is a $J _{b}$-holomorphic section of $X
_{b}$ in class $ {A}$ and $\cdot _{M}, \cdot _{B \times M}$ denote the
intersection pairings. 
\item The family $ \{J _{b}\}$ is $\pi$-compatible in the sense
above.
\end {itemize}
\section {Hofer geometry and $\Psi$} \label {section.hoferandpsi}
We now show how to construct a $\pi$-compatible family $\{J ^{h}_{b}\}$ on $P _{h}$
naturally adapted to Hofer geometry of the map $h: B \to \ls$. This family is
induced from a family of Hamiltonian connections $ \{ \mathcal {A}_b, X
_{b}\}$, which are in turn induced by a family of certain closed forms $
\{ \widetilde{\Omega} ^{h} _{b}\}$, which we now describe.
 
The construction of this family mirrors the construction in
 Section 3.2 of \cite{GS}.  First we define a
family of forms $\{{ \widetilde{\Omega}} ^{\infty} _{
b} \}$ on $B \times M \times   D ^{2} _{\infty}$.
\begin{equation} \label {eq.coupling.family} { \widetilde{\Omega}}_{b} ^{h}|
_{D ^{2} _{\infty}} (x,r, \theta) = \omega - d ( \eta (r) H^b_ \theta (x))
\wedge d\theta \end{equation} Here, $H ^{b}_\theta$ is the generating Hamiltonian for $h(b)$, normalized so that $$\int
_{M}
H ^{b}_\theta \omega ^{n}=0, $$ for all $\theta$ and the function $\eta:
[0,1] \to [0,1]$ is a smooth function satisfying $$0 \leq \eta' (r),$$
and $$ \eta (r) = \begin{cases} 1 & \text{if } 1 -\delta \leq r \leq 1 ,\\
r ^{2}  & \text{if } r \leq 1-2\delta,
\end{cases}$$  for a small $\delta >0$. 

Note that under the
gluing relation $\sim$, $ (x, 1,\theta)_0 \mapsto (h (b, \theta)x,1,
\theta)_\infty$.
Thus,  $ \frac{\partial}{\partial \theta} \mapsto X _{H_\theta
^{b}} + \frac{\partial}{\partial \theta}$, $ \frac{\partial}{\partial x} \mapsto
(\gamma_\theta)_* (
\frac{\partial}{\partial x})$, and moreover $ \frac{\partial}{\partial r} \mapsto -
\frac{\partial}{\partial r}$.
We  leave it to the reader to check that the gluing relation $\sim$ pulls
back the form $ \widetilde{\Omega}_{b}^{h}| _{D ^{2} _{\infty}}$ to the form 
$\omega$ on the boundary $M \times \partial D^2_0$,
which we may then extend to $\omega$ on
the whole of $M
\times
D^2 _{0}$. Let $\{ \widetilde{\Omega} ^{h}_{b}\}$ denote the resulting family
on $X _{b}$. The forms $ \widetilde{\Omega} ^{h} _{b}$ on $X _{b}$ restrict
to $\omega$ on the fibers $M$ and the 2-form $\int _{M} (\widetilde{\Omega} ^{h}
_{b}) ^{n+1}$ vanishes on $S ^{2}$. Such forms are called \emph{coupling forms},
which is a notion due to Guillemin Lerman and Sternberg \cite{GuileminSternberg}. 
The form $ \widetilde{\Omega} ^{h} _{b}$ induces a connection on
$X _{b}$, by declaring horizontal subspaces to be those which are $
\widetilde{\Omega} ^{h} _{b}$-orthogonal to the vertical tangent spaces of
$\pi: X _{b} \to S ^{2}$. 
\begin{remark} \label{remark.couplingform} The induced connection is Hamiltonian
and moreover every Hamiltonian connection on $X _{b}$ is induced by a unique
coupling form in above sense, see \cite[Theorem 6.21]{MS2}. 
\end{remark}
We denote by $\{J
^{h} _{b} \}$ the  induced family of complex structures. An important property
of the family  $\{J ^{h} _{b}\}$ is that it is almost compatible with the
family $\{\Omega ^{h} _{b}, X _{b}\}$ defined by \begin{align} \label {eq.family} \Omega ^{h} _{b}| _{D ^{2} _{\infty}} = \widetilde{\Omega} ^{h}  _{b}| _{ D ^{2} _{\infty}} + (\max_x {H_\theta^b (x}))d \eta  \wedge d\theta, \quad
\Omega ^{h} _{b}| _{D ^{2} _{0}} = \widetilde{\Omega}
^{h}  _{b}| _{ D ^{2} _{0}}.
\end{align}
Where, \emph{almost compatible} means that $ \Omega ^{h} _{b} (v, J ^{h} _{b})
\geq 0$, $v \in TX _{b}$ and this inequality is strict for $v \in T ^{vert} X _{b}$. 

By the characterization of the class $ \mathcal {C}$ in \eqref{eq.calC}:
\begin{equation} \label {eq.coupling1} [ \widetilde{\Omega} ^{h} _{b}] = j ^{*}
(\mathcal {C}).
\end{equation}
Thus, 
\begin{equation} \label {eq.coupling2} [\Omega ^{h} _{b}] = j ^{*} (
\mathcal {C})  + [\pi ^{*}(\alpha_b)],
\end{equation}
where $j: X _{b} \to P _{h}$ is the inclusion map, and $\alpha _{b}$ is an
area form on $S ^{2}$ with 
\begin{equation} \label {eq.coupling3} L ^{+} (h (b)) =
\int _{S ^{2}} \alpha _{b}.
\end{equation}
\begin{lemma} \label{corollary.lower.bound} Let $ \{\Omega ^{h}_{b}\}$ and $
\{J ^{h} _{b}\}$ be as above, then we have the property that a vertical  $J
^{h} _{b}$-holomorphic section $u$ in the fiber $X _{b} \subset P _{h}$ gives a
lower bound
\begin {equation} \label {eq.lower.bound} -\mathcal {C}
([u]) \leq L ^{+}  (h (b).
\end {equation}           
\end{lemma}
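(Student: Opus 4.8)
The plan is to compute the energy of the section $u$ against the form $\Omega^h_b$ and then relate that energy to $L^+(h(b))$ using the explicit shape of the family $\{\Omega^h_b\}$ and the almost-compatibility of $\{J^h_b\}$. First I would observe that since $u$ is a vertical $J^h_b$-holomorphic section of $\pi\colon X_b \to S^2$, almost compatibility of $J^h_b$ with $\Omega^h_b$ gives $\int_{S^2} u^*\Omega^h_b \geq 0$; in fact, because $u$ is a section and hence not a vertical curve pointwise, the relevant nonnegativity is exactly what almost compatibility provides. Writing $[u]=A \in H_2^{sect}(X_b)$, the cohomological pairing $\langle [\Omega^h_b], A\rangle = \int_{S^2} u^*\Omega^h_b \geq 0$.

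Next I would unwind $[\Omega^h_b]$ via equation \eqref{eq.coupling2}: $[\Omega^h_b] = j^*(\mathcal{C}) + [\pi^*(\alpha_b)]$, so that
\begin{equation*}
0 \leq \langle [\Omega^h_b], A \rangle = \langle j^*(\mathcal{C}), A\rangle + \langle \pi^*(\alpha_b), A\rangle = \mathcal{C}(j_* A) + \int_{S^2}\alpha_b,
\end{equation*}
using that $A$ is a section class so $\pi_*A = [S^2]$, and \eqref{eq.coupling3} to identify $\int_{S^2}\alpha_b = L^+(h(b))$. Rearranging gives $-\mathcal{C}([u]) \leq L^+(h(b))$, which is exactly \eqref{eq.lower.bound}. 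Here I am writing $\mathcal{C}([u])$ for $\mathcal{C}(j_*A)$, consistent with the notation in \eqref{eq.def.ch.class}.

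The one point that needs genuine care — and which I expect to be the main obstacle — is justifying the inequality $\int_{S^2} u^*\Omega^h_b \geq 0$ rigorously, since $\Omega^h_b$ is not a symplectic form on $X_b$ (it is only a coupling form plus an area form pulled back from the base, and on the $D^2_0$ piece it is just $\omega$, which is degenerate in the horizontal directions). The correct statement is that almost-compatibility, $\Omega^h_b(v, J^h_b v) \geq 0$ for all $v \in TX_b$, forces the integrand $u^*\Omega^h_b = \Omega^h_b(\partial_s u, J^h_b \partial_s u)\, ds\wedge dt$ (in local holomorphic coordinates on $S^2$, using that $u$ is $J^h_b$-holomorphic) to be pointwise nonnegative as a $2$-form, hence the integral is nonnegative. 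I would need to check that the almost-compatibility established after \eqref{eq.family} is literally the condition $\Omega^h_b(v, J^h_b v)\geq 0$ for all tangent vectors, not merely vertical ones, which is precisely how "almost compatible" was defined there. Once that is in hand the lemma is immediate; the remaining steps are the cohomological bookkeeping above and an appeal to \eqref{eq.coupling2}–\eqref{eq.coupling3}, both of which are routine.
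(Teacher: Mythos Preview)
Your proposal is correct and follows essentially the same route as the paper: establish $\int_{S^2} u^*\Omega^h_b \geq 0$ from almost-compatibility together with $u$ being $J^h_b$-holomorphic, then use \eqref{eq.coupling2}--\eqref{eq.coupling3} to rewrite this as $\mathcal{C}([u]) + L^+(h(b)) \geq 0$. Your pointwise justification $u^*\Omega^h_b = \Omega^h_b(\partial_s u, J^h_b \partial_s u)\, ds\wedge dt \geq 0$ is exactly what the paper has in mind when it invokes almost-compatibility, holomorphicity of $u$, and $\pi$-compatibility of $J^h_b$ (the latter being what makes $\partial_t u = J^h_b \partial_s u$ hold for a section).
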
 
\begin{proof} Let $u: S^2 \to X _{b} \subset P _{h}$ be a holomorphic section,
then \begin{equation} \label {eq.positivity} 0 \leq \Omega ^{h}_{b} ([u]). 
\end{equation}
This follows from the almost compatible condition on $\Omega ^{h}_{b}$ and $J
^{h} _{b}$, from the fact that $u$ is a holomorphic map and by the fact that $J ^{h}
_{b}$ is $\pi$-compatible. Combining \eqref{eq.positivity} with
\eqref{eq.coupling2}, \eqref{eq.coupling3} we get:
$$0 \leq \int _{u} \Omega ^{h}_{b} = \mathcal {C}
([u]) + L ^{+} (h (b)).$$ 
\end{proof}
\begin{definition} \label{definition.energy} As the quantity $- \mathcal {C}
([u])$ is so important for us, we give it a name: $\energy$ of
$u$, or the \emph {\textbf{coupling energy}} of $u$.
\end{definition}
 Note that there are no fiber holomorphic curves in $P _{h}$ in class $A$ with
$\energy (A)> H _{\max}$, the maximum of the generating function $H$ of
$\gamma$.  This follows from the assumption that the pullback of the functional
$L ^{+}$ to  $B _{\gamma}$ attains its unique maximum at $\max$, $L ^{+} (h
(\max))=L ^{+} (\gamma)=H _{\max}$ and from the energy inequality
\eqref{eq.lower.bound}. On the other hand there is a special class $A _{\max}
\in H _{2} (X _{\gamma}) \subset H_{2} (P _{h})$ for which $\energy (A _{\max})=H
_{\max}$. We now describe this.

Let $x _{\max}$ denote the unique max of $H$, (cf. the discussion
preceding Theorem \ref{theorem.main}). There is a corresponding $\Omega ^{h}
_{\max}$-horizontal and thus holomorphic section $\sigma _{x_{\max}}$ of $\pi: X _{\max} \to S ^{2}$,
\begin{equation}  
\sigma _{x _{\max}} (z)= (\{x _{\max}\}, z
) _{0, \infty} \subset M \times D ^{2}_{0, \infty} \subset X _{\max} \quad \text
{for } z \in D ^{2} _{0, \infty}. 
\end{equation}

By \eqref{eq.coupling.family},  \eqref{eq.coupling1} $\energy ([\sigma_{x
_{\max}}])=H _{\max}$. Moreover, there are no other holomorphic sections $u$ in
$X _{\max}$ with $\energy ( [u])=H _{\max}$. This
observation is due to Seidel. For suppose otherwise, then by the proof of Lemma
\ref{corollary.lower.bound} we must have that $ \int _{u} \Omega ^{h}
_{\max}=0$, and so
\begin{multline} \label {eq.vanishing} 0=  \int _{u}
\omega - \eta (r) d H \wedge d \theta  - \int _{u} H  d  \eta  \wedge d\theta +
(\sup_x {H} (x))d \eta \wedge d\theta.
\end{multline}
Note that $u$ is necessarily horizontal, for otherwise
$\int _{u}\Omega ^{h} _{\max} >0$ (by the almost compatible property of $\Omega
^{h} _{\max}$ and $J ^{h} _{\max}$ and $\pi$-compatible property of $J ^{h}
_{\max}$). Hence the form $\omega - \eta (r)
d {H} \wedge d \theta$ must vanish on $u$, as the horizontal subspaces are
spanned by vectors $ \frac{\partial}{\partial r}, \frac{\partial}{\partial
\theta} + \eta (r)X _{H}$. Thus,
\eqref{eq.vanishing} can only happen if $u$ is $\sigma _{x _{\max}}$. In particular, the space $
\mathcal {M}
_{0} (X _{\max}, J ^{h} _{\max}, A_{\max})$ of 
unmarked holomorphic sections in $X _{\max}$ in homology class $A _{\max}=
[\sigma _{\max}]$ is identified with the point $x _{\max}$. We will also denote
by $A _{\max}$ the class $j_* (A _{\max}) \in H _{2} (P _{h})$, where $j: X \to P _{h}$ is the inclusion of fiber map.
 \begin{proposition} \label {lemma.max.energy} Let $h: B
_{\gamma} \to \ls$ be a smooth oriented cycle and $ \{\Omega ^{h} _{b}\}$ as above.
Suppose that the pullback by $h$ of the function $L ^{+}$ to  $B
_{\gamma}$  attains its
maximum at the unique point $\max \in B$, such that $h (\max)=\gamma$ then $ \mathcal {M} _{0} (P
_{h}, A _{\max}, \{J ^{h} _{b}\})$ lies over
$\max$ and is identified with $x _{\max}$.
\end{proposition}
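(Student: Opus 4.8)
The plan is to combine the energy bound of Lemma~\ref{corollary.lower.bound} with the hypothesis that $L^+ \circ h$ attains its maximum only at $\max$, and then invoke Seidel's uniqueness observation (already established in the excerpt for the single fiber $X_{\max}$) to pin down the moduli space. First I would take a pair $(u,b) \in \mathcal{M}_0(P_h, A_{\max}, \{J^h_b\})$. Since the family $\{J^h_b\}$ is $\pi$-compatible, $u$ is a $J^h_b$-holomorphic section of $X_b \subset P_h$, and its class $[u] = j_*(A_{\max})$ restricted to the fiber is $A_{\max} \in H_2(X_b)$, so $\energy([u]) = -\mathcal{C}(A_{\max}) = H_{\max}$. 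Lemma~\ref{corollary.lower.bound} then gives $H_{\max} = \energy([u]) \leq L^+(h(b))$. But $L^+(h(b)) \leq L^+(\gamma) = H_{\max}$ by the assumption that the maximum of $L^+ \circ h$ is attained at $\max$ with value $L^+(h(\max)) = L^+(\gamma) = H_{\max}$. Hence $L^+(h(b)) = H_{\max}$, which by the uniqueness of the maximizer forces $b = \max$. This shows $\mathcal{M}_0(P_h, A_{\max}, \{J^h_b\})$ lies over $\max$.

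Next I would identify the fiber over $\max$ with $x_{\max}$. Once we know $b = \max$, the moduli space in question is exactly $\mathcal{M}_0(X_{\max}, J^h_{\max}, A_{\max})$, the space of unmarked $J^h_{\max}$-holomorphic sections of $\pi: X_{\max} \to S^2$ in class $A_{\max} = [\sigma_{x_{\max}}]$. The excerpt has already carried out the key computation: tracing through equations \eqref{eq.coupling.family}, \eqref{eq.coupling1}, one gets $\energy([\sigma_{x_{\max}}]) = H_{\max}$, and any section $u$ with $\energy([u]) = H_{\max}$ must satisfy $\int_u \Omega^h_{\max} = 0$, which via the almost-compatibility of $\Omega^h_{\max}$ with $J^h_{\max}$ and $\pi$-compatibility forces $u$ to be $\Omega^h_{\max}$-horizontal; then the explicit form of the horizontal distribution (spanned by $\partial/\partial r$ and $\partial/\partial\theta + \eta(r)X_H$) together with the vanishing identity \eqref{eq.vanishing} forces $u = \sigma_{x_{\max}}$. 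So $\mathcal{M}_0(X_{\max}, J^h_{\max}, A_{\max})$ is the single point $x_{\max}$, and the proposition follows.

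I do not expect a serious obstacle here, since both ingredients are essentially in place in the excerpt; the proof is a matter of assembling them carefully. The one point that needs a little care is making sure the class identification is correct: I should check that $j_*: H_2^{sect}(X_b) \to H_2(P_h)$ is injective enough that $[u] = j_*(A_{\max})$ in $H_2(P_h)$ actually implies $[u] = A_{\max}$ in $H_2(X_b)$ — but this is immediate because $u$ is a section of $X_b$, so $[u]$ is already a section class in $X_b$ and its image determines it (the composition with the projection $P_h \to B$ and with $\pi: X_b \to S^2$ both constrain it). A secondary subtlety is orientation and the (later-needed) transversality of the problem, but for the set-level statement of this proposition — that the moduli space lies over $\max$ and is identified with $x_{\max}$ — the argument above suffices.
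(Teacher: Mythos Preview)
Your proposal is correct and follows essentially the same route as the paper: use the energy inequality \eqref{eq.lower.bound} together with the hypothesis on $L^{+}\circ h$ to force $b=\max$, then invoke Seidel's uniqueness argument (the discussion preceding the proposition) to identify $\mathcal{M}_{0}(X_{\max},J^{h}_{\max},A_{\max})$ with $x_{\max}$. The paper's proof is simply a two-line summary of exactly this, and your added remark about the class identification is a harmless extra check.
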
 
\begin{proof} 

The energy inequality \eqref{eq.lower.bound} shows that any  holomorphic curve 
in $P _{h}$ with $\energy$ $H _{\max}$ must lie in the fiber $X _{\max}$.
Consequently, by the discussion preceding the proposition, $$ \mathcal {M} 
_{0} (P _{h}, \{J ^{h} _{b}\}, A _{\max}) \simeq  \mathcal {M} _{0} (X
_{\max}, J ^{h} _{\max}, A _{\max}) \simeq x _{\max}.$$
\end{proof}  
\subsection {Proof of Theorem \ref{theorem.main}}
One example of Theorem \ref{theorem.main} that the reader may keep in mind comes from the
Hamiltonian $S ^{3}$ action on $S ^{2}$. Take $\gamma$ to be a one parameter
subgroup of $S ^{3}$, which is a great geodesic going around $S ^{3}$ once. The
subgroup $\gamma$ then acts on $S ^{2}$ by rotating it twice. The induced loop
$f \circ \gamma \subset \Omega \text {Ham}(S ^{2})$ is a critical point of $L
^{+}$, and this loop has a two parameter family of shortenings. More
specifically, the  unstable manifold $B _{\gamma}$ of $\gamma$ in $\Omega S ^{3}$ (for the
Riemannian energy functional) is two dimensional (by the Riemannian index theorem), and the pullback of the positive Hofer length
functional to  $B _{\gamma}$ is Morse at its maximum $\max \subset B
_{\gamma}$.
Why all this is the case
will be explained in the next  section. 
\begin{proof} 
It will be helpful to work with a special subset of $\pi$-compatible families $
\{J _{b}\}$. Let $ \widetilde{{C}}_b$ denote the space of coupling forms on $X
_{b}$, (see Remark \ref{remark.couplingform}) which restrict to $\omega$ over $M \times D ^{2} _{0}$ in the notation of \eqref{eq.definitionPh}. 
 A general element $ \widetilde{\Omega}
\in \widetilde{ {C}} _{b}$ is determined by a pair of families of
functions $G ^{\eta, \theta}, F ^{\eta, \theta}: M \to \mathbb{R}$, $$
\widetilde{\Omega}| _{D ^{2} _{\infty}}= \widetilde{\Omega} _{b} ^{h}| _{D ^{2} _{\infty}}  + d (G ^{\eta,
\theta} d\theta) + d(F ^{\eta, \theta} d \eta),$$  
where $ \widetilde{\Omega} _{b} ^{h}| _{D ^{2} _{\infty}}$ is defined in 
\eqref{eq.coupling.family} and $\eta$ is defined in the discussion following
\eqref{eq.coupling.family}.
 
  We set $ {C} _{b}
\subset \widetilde{ {C}} _{b}$ to be the subspace of those forms for which $$G ^{\eta,
\theta}= \eta \cdot G ^{\theta}, \text { for some $G _{\theta}$ and } 
\frac{d}{d\theta} F ^{\eta, \theta}= 0.$$ We also set $ {C}$ to be the space
of families $ \{ \widetilde{\Omega} _{b}\}$ on $P _{h}$, with each $ \widetilde{\Omega} _{b} \in \mathcal {C} _{b}$.
We have a function 
\begin{equation} \area: {C} _{b} \to \mathbb{R}, 
\end {equation}  
\begin{equation}
 \area (\widetilde{\Omega})= \inf \{\int _{S ^{2}} \alpha \, |
 \widetilde{\Omega} + \pi ^{*} (\alpha) \text { is symplectic} \}.
\end{equation}  
Let $\{\widetilde{\Omega}' _{b}\}$ be as in Lemma
\ref{lemma.perturbation} and sufficiently $C ^{\infty}$-close to $\{
\widetilde{\Omega} ^{h} _{b}\}$, then $ \{ \widetilde{\Omega} ' _{b}\}$ has
the property that the function 
\begin{equation} \label {eq.area} b \mapsto \area ( \widetilde{\Omega}'_{b}) 
\end{equation} 
on $B _{\gamma}$ attains its maximum  at the unique point $\max \in B
_{\gamma}$ and this function is Morse at $\max$. This readily follows from the
assumption that the pullback of $L ^{+}$ to $B _{\gamma}$ is Morse  at $\max$,
and by Lemma \ref{lemma.prop.ofarea} below.  Let $ \{J ' _{b}\}$ be the family
induced by $ \{ \widetilde{\Omega}' _{b}\}$, then by the proof of Proposition
 \ref{lemma.max.energy} $\mathcal {M} _{0} (P _{h'}, A _{\max},
\{J' _{b}\}) \simeq F _{\max}=\max$ and this moduli space is regular by
construction.  We thus verified the leading term of $\Psi (h)$ up to sign, which
depends  on the orientation of the cycle $B _{\gamma}$. The corrections are in
lower $\energy$ classes $A$, and consequently give rise to higher dimensional
moduli spaces via the dimension formula
\begin{equation} 2n+ \dim B_{\gamma} + <2 c_1 (T ^{vert} X), A>,
\end{equation}
and therefore are linearly independent of the leading term, (if they
contribute).
\end{proof}
\begin{lemma} \label{lemma.prop.ofarea} The coupling form $ \widetilde{\Omega}
^{h} _{\max}$ is a smooth point of the $\area$ functional on
${C}_{\max}$ and is critical.
\end{lemma}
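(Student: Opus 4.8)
The plan is to use the explicit parametrization of a neighborhood of $\widetilde{\Omega}^h_{\max}$ in $C_{\max}$ by the pair $(G^\theta,F^\eta)$ recorded above to compute $\area$ near $\widetilde{\Omega}^h_{\max}$ in closed form, and then to read off smoothness and criticality. Everything will hinge on two properties of the maximum $x_{\max}$ of the Morse Hamiltonian $H$ generating $\gamma$: it is a \emph{nondegenerate} maximum, and it is a \emph{critical point} of $H$, so that $X_H(x_{\max})=0$.

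First I would compute the top exterior power. For $\widetilde{\Omega}=\widetilde{\Omega}^h_{\max}+d(\eta G^\theta\,d\theta)+d(F^\eta\,d\eta)\in C_{\max}$ and a $2$-form $\alpha$ on $S^2$ with $\alpha=A(r,\theta)\,dr\wedge d\theta$ over $D^2_\infty$, expanding $(\widetilde{\Omega}+\pi^*\alpha)^{n+1}$ and using that $H^{\max}_\theta\equiv H$ is autonomous and that $\omega^n\wedge d_M f=0$ for functions $f$ on $M$, one finds (with no higher‑order terms, since the mixed fiber--base part of the form cubes to zero) that over $D^2_\infty$
\begin{equation*}
(\widetilde{\Omega}+\pi^*\alpha)^{n+1}=(n+1)\bigl(A-\eta'(H-G^\theta)-\eta\,\eta'\,\{H-G^\theta,F^\eta\}\bigr)\,\omega^n\wedge dr\wedge d\theta,
\end{equation*}
where $\{\cdot,\cdot\}$ is the Poisson bracket, while over $D^2_0$, where $\widetilde{\Omega}=\omega$, it is $(n+1)A\,\omega^n\wedge dr\wedge d\theta$. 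Thus $\widetilde{\Omega}+\pi^*\alpha$ is symplectic iff $A(r,\theta)>\max_{x\in M}E_{(G,F)}(x,r,\theta)$ over $D^2_\infty$ with $E_{(G,F)}(x,r,\theta)=\eta'(r)(H(x)-G^\theta(x))+\eta(r)\eta'(r)\{H-G^\theta,F^\eta\}(x)$, and $A>0$ over $D^2_0$. Since $\eta'$ vanishes near $r=0$ and near $r=1$, the bound $\max_xE_{(G,F)}$ vanishes there, so one may choose a smooth $\alpha$ with $\int_{S^2}\alpha$ as close as desired to $\int_0^1\int_0^1\max_xE_{(G,F)}\,dr\,d\theta$, and therefore
\begin{equation*}
\area(\widetilde{\Omega})=\int_0^1\!\!\int_0^1\max_{x\in M}E_{(G,F)}(x,r,\theta)\,dr\,d\theta;
\end{equation*}
in particular $\area(\widetilde{\Omega}^h_{\max})=\int_0^1\int_0^1\eta'(r)H(x_{\max})\,dr\,d\theta=H_{\max}=L^+(\gamma)$, as it should be by \eqref{eq.coupling3}.

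Smoothness and criticality now both come out of this formula. Because $H$ is Morse, $x_{\max}$ is its unique nondegenerate maximum; hence for $(G,F)$ small and $\eta'(r)>0$ the function $x\mapsto E_{(G,F)}(x,r,\theta)$ has a unique nondegenerate maximizer $x_{(G,F)}(r,\theta)$ depending smoothly on $(G,F)$ by the implicit function theorem, while $E_{(G,F)}\equiv 0$ where $\eta'(r)=0$; thus $\max_xE_{(G,F)}$, and therefore $\area$, is a smooth function of $(G,F)$ near $0$, i.e.\ $\widetilde{\Omega}^h_{\max}$ is a smooth point. For criticality I would differentiate at $(G,F)=0$: by the envelope theorem only the derivative of $E_{(G,F)}$ at $x=x_{\max}$ enters, and the bilinear term drops, so
\begin{equation*}
d\area_{\widetilde{\Omega}^h_{\max}}(G,F)=\int_0^1\!\!\int_0^1\bigl(-\eta'(r)\,G^\theta(x_{\max})+\eta(r)\eta'(r)\,\{H,F^\eta\}(x_{\max})\bigr)\,dr\,d\theta.
\end{equation*}
The second term vanishes because $\{H,F^\eta\}(x_{\max})=dF^\eta(X_H)(x_{\max})=0$, as $X_H(x_{\max})=0$. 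For the first term, $v:=d(\eta G^\theta\,d\theta)+d(F^\eta\,d\eta)=\widetilde{\Omega}-\widetilde{\Omega}^h_{\max}$ is a closed $2$-form on $X_{\max}$ that vanishes over $D^2_0$; since $\widetilde{\Omega}$ and $\widetilde{\Omega}^h_{\max}$ are both coupling forms, both have cohomology class $j^*\mathcal{C}$ by the characterization \eqref{eq.calC}, so $[v]=0$. Pulling $v$ back by the flat section $\sigma_{x_{\max}}$, which lies over $x_{\max}$ and so kills every fiber form, gives $\sigma_{x_{\max}}^*v=\eta'(r)G^\theta(x_{\max})\,dr\wedge d\theta$ over $D^2_\infty$ and $0$ over $D^2_0$, whence $\int_0^1G^\theta(x_{\max})\,d\theta=\int_{[\sigma_{x_{\max}}]}v=0$. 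Hence the first variation vanishes on all of $T_{\widetilde{\Omega}^h_{\max}}C_{\max}$, i.e.\ $\widetilde{\Omega}^h_{\max}$ is critical.

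The main obstacle is the first step: carrying out the top‑power computation and, especially, justifying that the infimum in the definition of $\area$ genuinely equals the pointwise‑envelope integral — this requires care with the behaviour of $\alpha$ near the poles and across the gluing circle $r=1$, and with the fact that although the infimum is not attained it still defines a smooth functional. Given the formula for $\area$, the differentiation and the two vanishing arguments are routine; the only delicate point there is the cohomological identity $[v]=0$, which is precisely where one uses that all of $C_{\max}$ consists of coupling forms.
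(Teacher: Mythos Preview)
Your proposal is correct but follows a genuinely different route from the paper. The paper does not compute $\area$ at all: it observes that a variation $\widetilde{\Omega}^s$ in $C_{\max}$ induces a variation $\gamma_s$ of the boundary monodromy loop, argues that (by the special form of elements of $C_{\max}$) the functional $\area$ on $C_{\max}$ is identified with $L^{+}$ on the space of such loops, and then simply quotes Ustilovsky's result that a one-parameter subgroup $\gamma$ generated by a Morse Hamiltonian is a smooth critical point of $L^{+}$ on $\Omega\,\mathrm{Ham}(M,\omega)$. The details of this identification are explicitly left to the reader.

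You instead bypass $L^{+}$ entirely: you expand $(\widetilde{\Omega}+\pi^*\alpha)^{n+1}$ to obtain a closed formula for $\area$ on $C_{\max}$, use the Morse condition on $H$ and the implicit function theorem to get smoothness of the pointwise maximizer, and then differentiate via the envelope theorem. The vanishing of the $F$-variation is immediate from $X_H(x_{\max})=0$; for the $G$-variation you use the cohomological characterization of coupling forms to conclude $[\widetilde{\Omega}-\widetilde{\Omega}^h_{\max}]=0$ and hence $\int_0^1 G^\theta(x_{\max})\,d\theta=0$ by pairing with $[\sigma_{x_{\max}}]$. This is a clean and self-contained argument that in effect reproves the relevant instance of Ustilovsky's theorem, and the cohomological step is a nice way to handle the normalization that the paper's approach would absorb into the definition of $L^{+}$. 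The trade-off is that your route requires carrying out the top-power expansion and justifying that the infimum defining $\area$ really equals the envelope integral (which you rightly flag as the delicate point), whereas the paper's reduction, once the identification $\area \leftrightarrow L^{+}$ is granted, is a one-line citation.
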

Variation  $ \widetilde{\Omega} ^{s}$ in $ {C}_{\max}$ induces a
variation of the boundary monodromy maps $\gamma _{s}: [0,1] \to \text {Ham}(M,
\omega)$ induced by $\widetilde{\Omega} ^{s}| _{D ^{2} _{\infty}}$, which are
necessarily loops in $ \text {Ham}(M, \omega)$, since $
\widetilde{\Omega} ^{s}|  _{D ^{2}_0}= \omega$. By the properties of coupling
forms in $  {C} _{\max}$, the statement of the lemma is
equivalent to $\gamma$ being a smooth critical point for the $L ^{+}$ functional
on $\ls$. We leave the details to the reader. \begin{lemma} \label{lemma.perturbation} Let $h: B _{\gamma} \to \ls$
be as in Theorem \ref{theorem.main}. Then there is a family $ \{
\widetilde{\Omega}' _{b}\} \in  {C}$ on $P _{h}$ arbitrarily $C
^{\infty}$-close to $ \{ \widetilde{\Omega} ^{h} _{b}\}$, with $
\widetilde{\Omega}' _{\max} = \widetilde{\Omega} ^{h} _{\max}$, such that the
induced family $\{J ' _{b}\}$ is regular for $A _{\max}$-class curves. 
\end{lemma}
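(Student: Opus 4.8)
The plan is to reduce the lemma to a single transversality assertion, at the distinguished section $\sigma_{x_{\max}}$ lying over $\max$, and then to produce the perturbed family by a finite dimensional genericity argument; the underlying point is that a family in $C$ carries, in the $B_\gamma$-direction, exactly the number of extra parameters needed to kill the cokernel that the weights \eqref{eq.weights} impose on the vertical Cauchy--Riemann operator. First I would check that for any $\{\widetilde{\Omega}'_b\}\in C$ which is $C^\infty$-close to $\{\widetilde{\Omega}^h_b\}$ and has $\widetilde{\Omega}'_{\max}=\widetilde{\Omega}^h_{\max}$, the moduli space $\mathcal M_0(P_{h'},A_{\max},\{J'_b\})$ is still exactly the point $\sigma_{x_{\max}}$ over $\max$. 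Indeed, as in the proof of Theorem \ref{theorem.main} the function $b\mapsto\area(\widetilde{\Omega}'_b)$ still attains a strict Morse maximum equal to $H_{\max}$ at $\max$; hence $\area(\widetilde{\Omega}'_b)<H_{\max}$ for $b\neq\max$ near $\max$, and off a neighbourhood of $\max$ the same strict inequality holds by $C^\infty$-proximity to $\{\widetilde{\Omega}^h_b\}$, so the energy inequality of Lemma \ref{corollary.lower.bound}, in its form for $\Omega'_b$, forbids any $A_{\max}$-section in $X_b$; at $b=\max$ the coupling form is unchanged, so Seidel's uniqueness argument recalled before Proposition \ref{lemma.max.energy} applies; and monotonicity rules out nodal configurations, since a fibre bubble of positive area would force its section component to have coupling energy strictly larger than $H_{\max}$, contradicting the same inequality. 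Consequently ``$\{J'_b\}$ is regular for $A_{\max}$-class curves'' means precisely that the full linearisation of the section equation at $(\sigma_{x_{\max}},\max)$ --- the vertical Cauchy--Riemann operator $D_{\sigma_{x_{\max}}}$ together with the derivative of the section equations in the $B_\gamma$-direction --- is surjective.

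Next I would unwind this linearisation. Along $\sigma_{x_{\max}}$ the bundle $\sigma_{x_{\max}}^*T^{vert}X_{\max}$ splits $S^1$-equivariantly, according to \eqref{eq.weights}, into line subbundles which, with $J^h_{\max}$, are holomorphically $\mathcal{O}(k_i)$. Since all $k_i\le-1$ this gives $\ker D_{\sigma_{x_{\max}}}=0$, while $\coker D_{\sigma_{x_{\max}}}\cong\bigoplus_i H^1(S^2,\mathcal{O}(k_i))$ has real dimension $\sum_i 2(|k_i|-1)=I^{virt}(\gamma)=\dim B_\gamma$ --- this being the cokernel computation of \cite[Section 5.1]{GS} recalled before Theorem \ref{theorem.main}, together with the hypothesis on $\gamma$. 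Hence the full linearisation is onto if and only if the composite
\[
T_{\max}B_\gamma \xrightarrow{\ \mathcal L\ } \Omega^{0,1}\bigl(\sigma_{x_{\max}}^*T^{vert}X_{\max}\bigr) \longrightarrow \coker D_{\sigma_{x_{\max}}}
\]
is an isomorphism, where $\mathcal L(v)$ is assembled from the first order variation $\partial_v J'_b|_{\max}$, and hence, by the chain rule, from the first order variation $\partial_v\widetilde{\Omega}'_b|_{\max}$ of the family of coupling forms.

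The constraint $\widetilde{\Omega}'_{\max}=\widetilde{\Omega}^h_{\max}$ is harmless here because the $B_\gamma$-derivative of the family at $\max$ is free: replacing $\{\widetilde{\Omega}_b\}$ by $\{\widetilde{\Omega}_b+\phi(b)\,\delta\widetilde{\Omega}\}$ with $\phi$ a small cut-off vanishing at $\max$ and $\delta\widetilde{\Omega}$ a fixed tangent vector to $C_{\max}$ keeps the family in $C$, leaves $\widetilde{\Omega}_{\max}$ unchanged, and changes $\partial_v\widetilde{\Omega}'_b|_{\max}$ by $d\phi|_{\max}(v)\,\delta\widetilde{\Omega}$; summing finitely many such modifications realises an arbitrary element of $\mathrm{Hom}(T_{\max}B_\gamma,T_{\widetilde{\Omega}^h_{\max}}C_{\max})$. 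So the whole matter reduces to showing that the linear map $T_{\widetilde{\Omega}^h_{\max}}C_{\max}\to\coker D_{\sigma_{x_{\max}}}$ induced by $\delta\widetilde{\Omega}\mapsto\bigl[\tfrac12(\delta J)\circ d\sigma_{x_{\max}}\circ j\bigr]$, where $\delta J$ is the variation of $J^h_{\max}$ determined by $\delta\widetilde{\Omega}$, is surjective: granting this, one may arrange $\mathcal L$ to send a chosen basis of $T_{\max}B_\gamma$ onto a basis of $\coker D_{\sigma_{x_{\max}}}$, hence to be an isomorphism, and since the isomorphisms form an open dense subset of the relevant $\mathrm{Hom}$ and the admissible modifications sweep out a linear subspace, such a family $\{\widetilde{\Omega}'_b\}$ may be taken arbitrarily $C^\infty$-close to $\{\widetilde{\Omega}^h_b\}$ with $\widetilde{\Omega}'_{\max}=\widetilde{\Omega}^h_{\max}$.

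I expect the surjectivity in the last paragraph to be the main obstacle, since it requires the rather special variations of $J'_b$ coming from variations of coupling forms \emph{within} $C$ --- as opposed to arbitrary variations of $J$ --- to still hit every class in $\bigoplus_i H^1(S^2,\mathcal{O}(k_i))$. I would carry this out by writing $\delta\widetilde{\Omega}$ in the normal form $d(\eta\,\delta G^\theta\,d\theta)$ afforded by the description of $C_{\max}$, computing the induced $\delta J$ along $\{x_{\max}\}\times D^2$ in the equivariant splitting \eqref{eq.weights}, recognising the output as a Dolbeault representative of a class in $H^1$, and pairing it against the Serre-dual model $H^1(S^2,\mathcal{O}(k_i))^{*}\cong H^0(S^2,\mathcal{O}(-k_i-2))$ to verify that nothing is missed; this is essentially the transversality computation behind the Seidel-type representation of \cite{GS}, now performed in the single fibre $X_{\max}$.
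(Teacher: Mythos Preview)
Your reduction is correct and matches the paper's: both arguments come down to the surjectivity of the map from coupling-form variations $T_{\widetilde{\Omega}^h_{\max}}C_{\max}$ onto the $(0,1)$-forms along $\sigma_{x_{\max}}$, together with the observation that $\ker D_{\sigma_{x_{\max}}}=0$ (all Chern numbers of the normal bundle are negative) and the freedom to prescribe the $B_\gamma$-derivative of the family at $\max$ while keeping $\widetilde{\Omega}'_{\max}$ fixed. The paper's packaging is slightly more roundabout --- it first produces an abstract perturbation $\mathcal V$ of the $\bar\partial$-section by differential topology and then realises $\mathcal V$ through coupling forms --- but the content is the same, and your more direct linear-algebraic formulation (``$T_{\max}B_\gamma\to\coker D_{\sigma_{x_{\max}}}$ must be an isomorphism'') is arguably cleaner.

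Where you diverge is at the step you flag as the main obstacle. The paper does not carry out any explicit Serre-dual computation; it simply invokes \cite[Theorem~8.3.1 and Remark~3.2.3]{MS}, which says that for a somewhere-injective curve, varying the Hamiltonian connection (equivalently, the coupling form) already surjects onto the \emph{full} space $\Omega^{0,1}_J(S^2,u^*TX_{\max})$, hence a fortiori onto $\coker D_{\sigma_{x_{\max}}}$. Since $\sigma_{x_{\max}}$ is embedded this applies directly, and the restricted class $C_{\max}$ still contains enough variations for the McDuff--Salamon argument to go through. Your proposed explicit computation would also work in principle, but it is unnecessary; replace your last paragraph with this citation and the proof is complete.
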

\begin{proof} 
Denote by ${\mathcal {B}}$ the space of pairs $(u, b)$, $u \in \mathcal {B}
_{b}$, with $\mathcal {B}
_{b}$ denoting the space of holomorphic sections of $X _{b}$.
We have a bundle
\begin{equation*} {\mathcal {E}} \to {\mathcal {B}},
\end{equation*}
whose fiber over $(u,b)$ is $\Omega ^{0,1} (S ^{2}, (u,b) ^{*} TX _{b})$, and
the section we call $ \mathcal {F} _{h}$, 
\begin{equation*} \mathcal {F} _{h} (u _{b})= \delbar _{J ^h_{b}} (u).
\end{equation*}

By the assumption that $h$ is Morse
at $\gamma$, and Proposition \ref{lemma.max.energy}, $$\mathcal {M} _{0} (P
_{h}, A _{\max}, \{J ^{h} _{b}\})$$ is a zero dimensional manifold  consisting
of a single point $u _{\max}$, which corresponds to the section $\sigma _{x
_{\max}}$ of $X _{\max} \subset P _{h}$. By assumption we have that $I
^{virt}(\gamma)=\dim B _{\gamma}$, where $I ^{virt} (\gamma)$ is the cokernel of
the vertical differential $$D \mathcal {F} _{h}| _{T _{u_{\max}} \mathcal {B}
_{\max}}.$$ And so zero is the expected dimension of $\mathcal {M} _{0} (P
_{h}, A _{\max}, \{J ^{h} _{b}\})$. Therefore, if we can perturb (abstractly) the section $ \mathcal {F} _{h}$, fixing it over $ \mathcal {B} _{\max} \subset { \mathcal {B}}$,  so that the corresponding vertical
differential at $u _{\max}$ has no kernel, then the perturbed section would be
necessarily transverse to the 0-section at $u _{\max}$. (The corresponding
differential would necessarily be Fredholm of index zero.)

More
specifically, we need a smooth  vertical (tangent to the fibers of $
{ \mathcal {E}} \to { \mathcal {B}}$) vector field
$\mathcal {V}$ along $ \mathcal {F} _{h}$ having the properties that it 
 vanishes over $ \mathcal {B} _{\max}$,  
and so that $
\mathcal {F} _{h}$ exponentiated along $\mathcal {V}$,  $$exp ^{ \mathcal {V}} _{t} ( \mathcal {F}
_{h}): { \mathcal {B}} \to { \mathcal {E}}$$ is transverse to
the $0$-section for all  sufficiently small time $t$. This is just a matter of
differential topology. The assumption that $ \mathcal {V}$ vanishes over $
\mathcal {B} _{\max}$  can be accommodated due to the fact that the vertical
differential $D \mathcal {F} _{h}$ restricted to $T _{u _{\max}} \mathcal {B} _{\max}$ has no
kernel. Which in turn follows from the fact that the vertical normal bundle to $u
_{\max}$ in $X _{\max}$ is holomorphic and all its Chern numbers are negative.
(Indeed this is another instance where the Morse condition on $H$ is crucial.)

We may also assume that $
\mathcal {V}$ vanishes  outside a neighborhood ${
\mathcal {B}} _{U _{\max}}$ of the curve $u _{\max}$ in $ \mathcal {B}$, with
all maps in  ${ \mathcal {B}} _{U _{\max}}$ lying over $U _{\max}$ a
contractible neighborhood of $\max$ in $B _{\max}$. Trivializing $P _{h}$ over
$U _{\max}$ we have $ \mathcal {B} _{U _{\max}}= \mathcal {B} _{\max} \times U
_{\max}$, where $ \mathcal {B _{\max}}= C ^{\infty} (S ^2, X _{\max})$.

We now show that the
perturbation $\mathcal {V}$ can be realized by perturbing the  family $
\{\Omega ^{h}_b\}$ and hence the induced family $ \{J ^{h} _{b}\}$.
Let 
\begin{equation*} { \mathcal {E}} \to (\mathcal {B} _{\max} \times U
_{\max}) \times  {C} _{\max} \equiv \widetilde{  {C}}
\end{equation*}
be the fibration whose fiber over $ (u, b, \widetilde{\Omega})$ is
$\Omega ^{0,1} _{J} (S ^{2}, u ^{*} TX _{\max})$ the space of $j, J$-anti-linear one forms, where $J$ is induced by $ \widetilde{\Omega}$. Let $$\mathcal
{F}:  \widetilde{  {C}} \to { \mathcal {E}}$$  be the map
\begin{equation*} \mathcal {F} (u, b, \widetilde{\Omega}) = \delbar _{J}
(u)
\end{equation*}

Denote by $T ^{vert} \widetilde{ \mathcal {C}}$
the vertical tangent bundle of $pr: \widetilde{  {C}} \to \mathcal {B}
_{\max} \times U _{\max}$. The vertical differential
\begin{equation*} D \mathcal {F}: T ^{vert} \widetilde{  {C}} \to T
^{vert} \mathcal {E},
\end{equation*} is a family of maps 
\begin{equation} \label {eq.onto} D \mathcal {F}  (u, b, \widetilde{\Omega}): T
_{ \widetilde{\Omega}}  {C} _{\max} \to \Omega _{J} ^{0,1} (S ^{2},
u ^{*} TX _{\max}). \end{equation}  By the proof of
Theorem 8.3.1 and Remark 3.2.3 in \cite {MS}, \eqref{eq.onto} is
onto for every $u$, $ \widetilde{\Omega}$. (One must of course work with the
appropriate Sobolev completions for this.)

 Let $ \mathcal {S}: U _{\max} \to \widetilde{  {C}}$ be the map $
 \mathcal {S} (b) = (u _{\max}, b, \widetilde{ \Omega} ^{h} _{b})$, induced by
 the family $\{ \widetilde{\Omega} ^{h} _{b} \}$ on $P _{h}$ over $ U _{\max}$.
And let $$ \mathcal {A} = D \mathcal {F} ^{-1} ( \mathcal {V}) \subset T ^{vert}
\widetilde{  {C}}$$ Since \eqref{eq.onto} is onto for every $u$,
$ \widetilde{\Omega}$, $ \mathcal {A}| _{ \mathcal {S}}$ fibers over $ \mathcal
{S}$. Let $ \mathcal {W}$ be any section, which we may think of as an infinitesimal
perturbation of the family $\{ \widetilde{\Omega} ^{h} _{b}\}$ for $b \in U
_{\max}$. This perturbation extends by vanishing perturbation outside $U
_{\max}$. The infinitesimal perturbation $ \mathcal {W}$ is the one
we were looking for and so we are done.
\end{proof} 
\subsection {Proof of Corollary \ref{corollary.max.length}}
The first part is immediate. To prove the second part
note that $h: B _{\gamma} \to \Omega \text {Ham}(M, \omega)$ has max length
measure $H _{\max}$. On the other hand if the max length measure of the map $h$
could be reduced below $H _{\max}$ by moving it in its homology 
class to say $h':  B \to \Omega \text {Ham}(M, \omega)$, then this would destroy
the contribution to $\Psi (h)=\Psi (h')$ in the $\energy$ $H _{\max}$, because by Corollary \ref{corollary.lower.bound}
there would simply be no vertical $ \{J ^{h'} _{b}\}$-holomorphic curves in $P _{h'}$ with $\energy =H
_{\max}$, this is a contradiction.  
\qed


\subsection {Proof of Theorem \ref{theorem.index}}
We clearly have $I^{H}(\gamma) \geq \dim B_{\gamma}$. 
Suppose $I ^{H} (\gamma) > \dim B_{\gamma}=m$, then there a subspace $N \subset
T _{\gamma} \ls$ such that $N \supset h_* T _{\max} B _{\gamma}$, $\dim N= \dim
B _{\gamma}+1$ and such that the Hessian of $L ^{+}$ is negative definite on
$N$. We homotop the map $h$ to a map $h'$ so that $L ^{+}(h' (b)) < L ^{+}
(\gamma)$ for all $b \in B _{\gamma}$, which kills the contribution to $\Psi
(h)$ in the energy $H _{\max}$ by the proof of \ref{corollary.max.length}. This
will conclude the proof. 

Let $\phi: D ^{m} \to B _{\gamma}$ be a
chart containing $\max$. We may homotop $h$ to a map $\widetilde{h}$ with
the same image as $h$, with $ \widetilde{h}$ being the constant map to $\gamma$
on $\phi (D ^{m})$.
Let
$p: D ^{m} \to N -0$ be an embedding so that $$p: \partial D ^{m} \to h_* (S
^{m} \subset T _{\max} B _{\gamma})$$ is a degree one map, where the unit sphere $S ^{m}$ is determined by the trivialization $\phi$.
Under the identification given by $\phi$
extend $p$ to any  smooth map $$ \widetilde{p}: B _{\gamma} \to T \ls.$$ Now
move $ \widetilde{h}$ along $\widetilde{p}$ by exponentiating for a sufficiently small time,
then the exponentiated map $h'$ will have the required property.
\qed



%
%
%
%
%
 \section {Morse theory on $\Omega \text {Ham}(G/T)$ and $\Omega G$.} \label
 {section.energy.flow}
%
Let $M=G/T$, where $T$ is its maximal torus. There a symplectic
structure on $G/T$, inherited from that of $T ^{*} G$ by 
symplectic reduction of the natural $G$ action on $T ^{*}G$, ($G/T$ is the
generic leaf of the symplectic reduction.)   The leaves of the the symplectic
reduction of $T ^{*}G$ and hence $G/T$ can be identified with orbits of the 
coadjoint action of $G$ on $\frak {g} ^{*}$, where  $ \frak g$ is the Lie
algebra of $G$. The symplectic structure is then induced from  a natural 2 form
on $\frak {g} ^{*}$ called the Kirillov form, (see \cite{Arnold.methods}). 

Let $G$ be semisimple and $\mathcal {O} _{p _{0}}$ a coadjoint orbit of $p_0
\in \mathfrak g ^{*}$ by $G$. Then $G$ acts on $\mathcal {O} _{p_0}$ by $ \phi
_{g} (p) =Ad ^{*} _{g ^{-1}}(p)$. With the infinitesimal action $X _{\eta} (p)= -ad ^{*} _{\eta} (p)$ for $\eta
 \in \frak g$. The generating
 function is defined by $H _{\eta} (p)= p(\xi)$ and it is normalized, as the
 map $$\eta \mapsto \int _{O _{\xi}} H _{\eta} (p) = \int _{O _{\xi}} p (\eta)$$ defines an element of $ \frak g ^{*}$,
which is clearly invariant under the coadjoint action of $G$ and so            
must be 0 (since $\frak g$ has no center).\footnote {I would like to thank Yael
Karshon for suggesting this argument.}  

  Suppose now  $\xi= \frac{d}{dt}
 \arrowvert_0 \gamma \in \mathfrak {g}$, where $\gamma$ is a one parameter
 subgroup and let $ \mathcal {O _{\xi}}$ denote the coadjoint orbit of the
 covector $< \frac{\xi}{||\xi||}, \cdot>$. In this case the maximum of the generating function $H _{\xi}$ on $ \mathcal {O _{\xi}}$ is
 $< \frac{\xi}{||\xi||} , \xi> = ||\xi||$. Moreover, we get an 
 inequality relating positive Hofer norm with Riemmanian norm,
 \begin
 {equation} \label {eq.norm} ||\eta|| ^{+} \leq ||\eta||,
\end {equation}
for any $\eta$, 
where $$||\eta|| ^{+} = \max _{ \mathcal {O} _{\xi}} H _{\eta}.$$  
In this discussion the symplectic manifold $ \mathcal {O _{\xi}}$ depends on
 $\xi$. If we make an additional  assumption, that the subgroup of $G$ fixing
$<\xi, \cdot>$, under the coadjoint action is $T$, then  we can identify $O
_{\xi} \simeq (G/T, \omega _{\xi})$. Moreover, this condition is generic
in $ \frak {g} ^{*}$, from which it follows that the symplectic forms $\omega
_{\xi}$ are deformation equivalent. Also $(G/T, \omega _{\xi}, j _{\xi})$,
are Fano for an integrable complex structure $j _{\xi}$ depending smoothly on
$\omega _{\xi}$. Therefore, we may regard $ \mathcal {O} _{\xi}$ as simply
$G/T$ for our  purposes of quantum homology and the map $\Psi$, (cf. Remark
\ref{remark.fano}) 
\subsection {Morse theory on $\Omega G$}
Let $h: B _{\gamma} \to \Omega G$ be the pseudocycle corresponding to the
unstable manifold of $\gamma$ for the energy flow on $\Omega G$. (See
the discussion in Section \ref{section.special.case}. It is necessarily a
pseudocycle, since all the indexes of critical points of $E$ are even.) As
before we denote by $\max \in B _{\gamma}$ the point $h (\max)=\gamma$.
\begin{theorem} \label{lemma.normalized} 
Let $G$ be a semi simple compact Lie
group, then the positive Hofer length functional $L ^{+}: B _{\gamma} \to \mathbb{R}$ 
(its pullback from $\Omega \text {Ham}(O _{\xi})$) is Morse at $\max$.
Moreover, if the centralizer of $\gamma$ is the torus then the indexes
$I ^{virt} (f(\gamma))$  (cf. eq. \eqref{eq.virtualindex}) and the Riemannian index of
$\gamma$ coincide.  In other words: \begin {equation} I ^{virt}(f ( \gamma))=\dim B _{\gamma}.
\end {equation}
\end{theorem}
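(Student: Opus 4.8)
The plan is to reduce the theorem to two independent comparisons: a Morse-theoretic statement about the positive Hofer functional near $\gamma$, and an index computation matching the virtual index of $f(\gamma)$ with the Riemannian index of $\gamma$. For the first part, I would argue as in Lemma~\ref{lemma.prop.ofarea}: a variation of the coupling form $\widetilde{\Omega}^h_{\max}$ in $\mathcal{C}_{\max}$ corresponds to a variation of the boundary monodromy loop, so smoothness and criticality of $\area$ at $\widetilde{\Omega}^h_{\max}$ is equivalent to $f(\gamma)$ being a smooth critical point of $L^+$ on $\ls$, which holds by Ustilovsky~\cite{U} because $f(\gamma)$ is generated by the Morse Hamiltonian $H_\xi$. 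That $L^+$ pulled back to $B_\gamma$ is Morse at $\max$ then follows by comparing the Hessian of $L^+$ along $h$ with the Hessian of the Riemannian energy functional $E$ on $\Omega G$, using that $h$ parametrizes the unstable manifold for the energy flow; the point is that the map $f$ induced by the Hamiltonian action is, infinitesimally at $\gamma$, compatible enough with these two functionals that the nondegeneracy transfers.

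For the index identity, the key input is formula~\eqref{eq.virtualindex}: $I^{virt}(f(\gamma)) = \sum_{k_i \leq -1} 2(|k_i| - 1)$, where the $k_i$ are the weights of the linearized $\gamma$-action on $T_{x_{\max}}(G/T)$ at the maximizer $x_{\max}$ of $H_\xi$. Since the centralizer of $\gamma$ in $G$ is the torus $T$, the maximizer $x_{\max}$ is the point $eT \in G/T$ (the unique fixed point of $\gamma$ where $H_\xi$ is maximal), and the tangent space $T_{eT}(G/T)$ decomposes under the adjoint action into the root spaces $\mathfrak{g}_\alpha$ for roots $\alpha$. The weight of $\gamma = \exp(t\xi)$ on $\mathfrak{g}_\alpha$ is $\alpha(\xi)$, and with our sign conventions (making the weights at the maximum negative) the relevant $k_i$ are precisely those roots $\alpha$ with $\alpha(\xi) < 0$. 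So I would show
\begin{equation}
I^{virt}(f(\gamma)) = \sum_{\alpha(\xi) < 0} 2(|\alpha(\xi)| - 1).
\end{equation}
On the other side, the Riemannian index of $\gamma$ as a geodesic in $\Omega G$ for a bi-invariant metric is classically computed (e.g. Bott, or \cite{Press}) in terms of the same root data: the index equals the number of roots $\alpha$ with $\alpha(\xi) < 0$, counted with the multiplicity $|\alpha(\xi)| - 1$ (the number of intermediate conjugate points along $\gamma$ restricted to $[0,1]$), times $2$ for the complex dimension of each root space. Matching these two expressions term by term — the roots $\alpha$ with $\alpha(\xi)<0$ are exactly the weights $k_i \leq -1$ — gives $I^{virt}(f(\gamma)) = \dim B_\gamma$, since $\dim B_\gamma$ is the Riemannian index by definition of the unstable manifold.

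The main obstacle is verifying the precise dictionary between the two index formulas, in particular getting the conjugate-point count right: on the Riemannian side one must check that the Jacobi field analysis along the one-parameter subgroup $\gamma$ on $[0,1]$ yields exactly $|\alpha(\xi)| - 1$ conjugate points in the root direction $\mathfrak{g}_\alpha$ (this is where the normalization $\xi = \langle \xi/\|\xi\|, \cdot\rangle$ and the integrality of $\alpha(\xi)$ for a closed one-parameter subgroup enter), and on the quantum side one must confirm that the weights of the $\gamma$-action on $T_{x_{\max}}(G/T)$ really are the negative roots with the stated multiplicities. A secondary technical point is justifying that $x_{\max}$ is a single point with connected level sets, which follows here from the Atiyah--Guillemin--Sternberg convexity theorem exactly as invoked after Theorem~\ref{theorem.main}, together with the centralizer hypothesis forcing $x_{\max}$ to be a $T$-fixed point. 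Once the root-theoretic bookkeeping is pinned down, the rest is formal.
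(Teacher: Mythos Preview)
Your treatment of the index identity (the second part) is essentially the paper's proof: both identify the tangent space at the maximizer with the sum of root spaces $\bigoplus_\alpha \mathfrak{g}_\alpha$, read off the weights of the $\gamma$-action as the root values, and cite the classical computation (Milnor \cite[Section 23]{Morse.theory}, Bott) that the Riemannian index of the closed geodesic $\gamma$ equals $\sum_\alpha 2(|\alpha|-1)$.

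Your argument for the first part, however, has a genuine gap. Invoking Lemma~\ref{lemma.prop.ofarea} and Ustilovsky only tells you that $f(\gamma)$ is a \emph{smooth critical point} of $L^+$ on $\ls$; it says nothing about whether the Hessian of $L^+$ is negative definite on the particular subspace $h_*(T_{\max}B_\gamma)$. The phrase ``compatible enough \ldots\ that the nondegeneracy transfers'' is a hope, not an argument: there is no general reason the Hessians of $L^+$ and $E$ should be comparable merely because $f$ is induced by a group homomorphism.

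The paper's mechanism is concrete and you are missing it. From the explicit moment map $H_\eta(p)=p(\eta)$ on $O_\xi$ one gets a pointwise norm inequality $\|\eta\|^+ \leq \|\eta\|$ on $\mathfrak g$, with equality at $\eta=\xi$ (since $\max H_\xi = \langle \xi/\|\xi\|,\xi\rangle = \|\xi\|$). Integrating gives $L^+(\gamma_b)\leq L(\gamma_b)$ for every loop in $B_\gamma$, with equality at $\gamma$. A second squeeze comes from Cauchy--Schwarz: $L(\gamma_t)^2 \leq E(\gamma_t)$, with equality at the arclength-parametrized geodesic $\gamma$. Since $E|_{B_\gamma}$ is Morse at $\gamma$ by construction of the unstable manifold, and all three functionals are critical there, the second-variation comparison
\[
\frac{d^2}{dt^2}\Big|_0 L^+(\gamma_t)\ \leq\ \frac{d^2}{dt^2}\Big|_0 L(\gamma_t)\ =\ \frac{1}{2L(\gamma)}\,\frac{d^2}{dt^2}\Big|_0 L(\gamma_t)^2\ \leq\ \frac{1}{2L(\gamma)}\,\frac{d^2}{dt^2}\Big|_0 E(\gamma_t)\ <\ 0
\]
forces the Morse property for $L^+|_{B_\gamma}$. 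Without this sandwich of functionals your Part~1 is incomplete.
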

\begin{proof}    
By  \eqref{eq.norm} we have $L ^{+} (\gamma _{b}) \leq
L (\gamma _{b})$ for $\gamma _{b}$ any loop in $B _{\gamma}$ (or $\Omega G$),
where $L$ is the Riemmanian length functional on $\Omega G$. Since $L ^{+}
(\gamma)= L (\gamma)$, the first part of the theorem will follow if the
restriction of $L$ to $B _{\gamma}$ is Morse at $\gamma$. This is intuitively
clear as the restriction of the energy functional $E$ to $B _{\gamma}$
is Morse at $\gamma$ since $E$ is a Morse-Bott function on $\Omega G$. Here are
the details. Let $\gamma _{t}$ be a smooth variation of $\gamma=\gamma_0$ in $B
_{\gamma}$. Applying Cauchy-Schwarz inequality, \begin{equation*} (\int_a ^{b} fg \,
d\theta) ^{2} \leq (\int _{a} ^{b} f ^{2} d\theta) (\int _{a} ^{b} g ^{2} \,
d\theta),
\end{equation*} 
with $f (\theta)=1$ and $g (\theta) = || \frac{d}{d\theta}| _{\theta} \gamma
_{t} (\theta)||$, we get \begin{equation*} L (\gamma _{t})^{2} \leq E (\gamma
_{t}),
\end{equation*}  since $\gamma$ is parametrized from
$0$ to $1$. Both sides are the same for $t=0$, (since $\gamma$ is a
geodesic and so parametrized by arclength) and the derivatives of both sides
are 0 at $t=0$ since $\gamma$ is  critical for both $L$ and $E$. It follows
that \begin{equation*} \frac{d ^{2}}{dt ^{2}}|_0 L (\gamma _{t}) ^{2} =
2 L (\gamma) \cdot \frac{d ^{2}}{dt ^{2}}|_0 L (\gamma _{t}) \leq \frac{d
^{2}}{dt ^{2}}|_0 E <0,
\end{equation*}
and so $$\frac{d ^{2}}{dt ^{2}} |_0 L (\gamma_t) <0.$$

 We now prove the second part of the theorem.
Let $\gamma$ be generic and $\xi$
the corresponding element in $ \frak g$. In order to compute $I ^{virt}
(f \circ \gamma)$ we need to understand the weights of the coadjoint action of
$\gamma$ on the tangent space $T _{p} O _{\xi}$, where $p$ is the maximal fixed point
$p=< \frac{\xi}{||\xi||}, \cdot>$. Since the maps $Ad _{g} ^{*}$ are linear, this
action can be identified with the action of $\gamma$ on a subspace of $T _{0}
\frak g ^{*} \equiv \mathfrak{g} ^{*}$. Moreover,
under the
identification of $ \frak g ^{*}$ with $ \frak g$ using the $Ad$-invariant
inner product $<,>$ on $\frak g$ the coadjoint action by $Ad _{g} ^{*}$ on
$\frak g ^{*}$ corresponds to the adjoint action by $Ad _{g ^{-1}}$ on $\frak
{g}$ and so the coadjoint action of $\gamma$ on $\frak g ^{*}$ corresponds to
the adjoint action of $\gamma ^{-1}$ on $\frak g$. More specifically, we want the adjoint
action on a certain subspace of $T _{p} \subset \frak g$ which corresponds
under all these identifications to $T _{p} O _{\xi}$. In fact this subspace can
be determined synthetically as follows. Write
\begin {equation} \mathfrak g= \frak t \oplus \bigoplus _{\alpha} \frak g
_{\alpha},   
\end {equation}  
where $\frak t$  is the maximal Abelian subalgebra of $\frak g$ containing
$\xi$ and $\frak g _{\alpha}$ is a subspace of $\frak g$ on which $\gamma
^{-1}$ is acting by $e ^{ \alpha 2 \pi i \theta}$. (So that $ \frak t$
corresponds to $\alpha=0$.) Now, $T _{p}$ is invariant under the
adjoint action of $\gamma ^{-1}$ and  all the weights $\alpha$ are
necessarily non zero on $T _{p}$ and are negative. The latter is due to the
fact that the function $H _{\xi}$ on $O _{\xi}$ is Morse at its maximum $p$, 
which together with our convention $ X _{H}=-J \grad H$ implies that
the weights are negative.
The subspace $T
_{p}$ must then simply be $$T _{p}= \bigoplus _{\alpha} \frak g  _{\alpha}.$$ 
The virtual index is then by definition
\begin {equation*} \sum _{\alpha} 2|\alpha| -2. 
\end {equation*}
Using the index theorem in Riemannian geometry one can show that
this is the Riemannian index of the geodesic $\gamma$ of $G$, see for example
proof of Bott periodicity,  \cite [Section 23]{Morse.theory}.
\end{proof}   
\begin{corollary}  \label {corollary.ultimate} Let $G$ be a semi simple Lie
group and $\gamma$ a generic $S ^{1}$ subgroup. Then the pseudocycles $f \circ
h: B _{\gamma} \to \Omega \text {Ham}(G/T)$ satisfy the hypothesis of Theorem
\ref{theorem.main}.
\end{corollary}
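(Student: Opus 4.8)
The plan is to verify the two clauses of the hypothesis of Theorem~\ref{theorem.main} for the cycle $f \circ h$ — that it is Morse at $f\circ\gamma$, and that $I^{virt}(f\circ\gamma) = \dim B_\gamma$ — by reading them off from Theorem~\ref{lemma.normalized} together with the structure of the Riemannian energy flow on $\Omega G$ recalled in Section~\ref{section.special.case}.

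First I would record the translation of ``$\gamma$ a generic $S^1$ subgroup'': for $\gamma$ generic the stabilizer of $\langle \xi/\|\xi\|, \cdot\rangle$ under the coadjoint action is the maximal torus, so that $\mathcal O_\xi \cong (G/T, \omega_\xi)$ is Fano and the constructions of $\Psi$ apply verbatim (Remark~\ref{remark.fano}); moreover the generating Hamiltonian $H_\xi$ of $f\circ\gamma$ is then a Morse function on $G/T$ with unique maximum $p = \langle\xi/\|\xi\|,\cdot\rangle$, so that $I^{virt}(f\circ\gamma)$ is defined at all. By the second part of Theorem~\ref{lemma.normalized}, the generic hypothesis (centralizer equal to $T$) gives directly $I^{virt}(f\circ\gamma) = \dim B_\gamma$, which is the second clause.

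For the first clause I would show that $L^+\circ h$ attains its maximum at the unique point $\max$, that $h(\max) = f\circ\gamma$, and that $L^+\circ h$ is Morse at $\max$. The point $\max$ with $h(\max) = \gamma$ is built into the construction of $h$ as the pseudocycle of the unstable manifold of $\gamma$ for the energy flow (Section~\ref{section.special.case}), and Morse-ness at $\max$ is exactly the first part of Theorem~\ref{lemma.normalized}. For uniqueness of the global maximum I would use that along the negative energy-gradient trajectories issuing from $\gamma$ the Riemannian energy $E$ is strictly decreasing, and that passing to the closure $B_\gamma$ of the unstable disk only adjoins points of strictly smaller energy; hence $E\circ h \le E(\gamma)$ on $B_\gamma$ with equality only at $\max$. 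Combining this with the inequalities already assembled in the proof of Theorem~\ref{lemma.normalized} — namely $L^+(h(b)) \le L(h(b))$ by \eqref{eq.norm} and $L(h(b))^2 \le E(h(b))$ by Cauchy--Schwarz, together with $L^+(\gamma) = L(\gamma) = \|\xi\|$ and $E(\gamma) = \|\xi\|^2$ — one gets $L^+(h(b)) \le L^+(\gamma)$ for every $b$, with equality forcing $E(h(b)) = E(\gamma)$, hence $b = \max$. Thus $f\circ h$ is Morse at $f\circ\gamma$, and both hypotheses of Theorem~\ref{theorem.main} are satisfied.

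The step I expect to be the main obstacle is precisely the uniqueness of the top-energy point of $B_\gamma$: this is a statement about the global geometry of the energy flow on $\Omega G$ — that the closure of the unstable cell of $\gamma$ does not reattach to the critical level of $\gamma$ except at $\gamma$ itself — and it is what the ``deep facts of life'' of Pressley--Segal quoted in Section~\ref{section.special.case} are there to supply; once this is granted, the remainder is the bookkeeping above together with a citation of Theorem~\ref{lemma.normalized}.
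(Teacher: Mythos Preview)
Your proposal is correct and matches the paper's intended argument: the paper states the corollary without proof, treating it as an immediate consequence of Theorem~\ref{lemma.normalized}, and you have correctly unpacked exactly those two clauses. Your extra care about the global uniqueness of the maximum of $L^{+}\circ h$ on $B_{\gamma}$ is warranted --- the paper's proof of Theorem~\ref{lemma.normalized} literally only verifies the local negative-definiteness of the Hessian --- and your chain $L^{+}(h(b)) \le L(h(b)) \le \sqrt{E(h(b))} \le \sqrt{E(\gamma)} = L^{+}(\gamma)$, with the last inequality strict off $\max$ by the unstable-manifold description, is the right way to close that gap.
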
  
\begin{remark} Strictly speaking the map $\Psi$ in Theorem 1.2 is only defined
here and in \cite{GS} on cycles $f: B \to \ls$, where $B$ is a closed smooth
manifold. However there is no essential difficulty in extending this to
appropriate pseudocycles. The details of this will be given in a more general
context in the upcoming paper \cite{fieldtheory}.
\end{remark}
\bibliographystyle{siam}  
\bibliography{link} 

\begin{thebibliography}{10}

\bibitem{Arnold.methods}
{\sc V.~I. Arnold}, {\em Mathematical methods of classical mechanics}, vol.~60
  of Graduate Texts in Mathematics, Springer-Verlag, New York, 199?
\newblock Translated from the 1974 Russian original by K. Vogtmann and A.
  Weinstein, Corrected reprint of the second (1989) edition. Zbl 0647.70001, MR1345386

\bibitem{Duistermaat}
{\sc J.~Duistermaat}, {\em {On the Morse index in variational calculus.}}, Adv.
  Math., 21 (1976), pp.~173--195. MR0649277, Zbl 0361.49026, MR0649277 

\bibitem{GuileminSternberg}
{\sc V.~Guillemin, E.~Lerman, and S.~Sternberg}, {\em Symplectic fibrations and
  multiplicity diagrams}, Cambridge University Press, Cambridge, 1996. Zbl 1166.37024, MR1414677 

\bibitem{H}
{\sc H.~Hofer}, {\em On the topological properties of symplectic maps}, Proc.
  Roy. Soc. Edinburgh Sect. A, 115 (1990), pp.~25--38. Zbl 0713.58004, MR1059642 

\bibitem{Yael}
{\sc Y.~Karshon and J.~Slimowitz}, {\em Shortening the {H}ofer length of
  {H}amiltonian circle actions}.
\newblock arXiv:0912.0904.

\bibitem{kedra-2005-9}
{\sc J.~Kedra and D.~McDuff}, {\em Homotopy properties of {H}amiltonian
  group actions}, Geom. Topol., 9 (2005), pp.~121--162. Zbl 1077.53072, MR2115670

\bibitem{LM}
{\sc F.~Lalonde and D.~McDuff}, {\em The geometry of symplectic energy}, Ann.
  of Math. $(2)$, 141 (1995), pp.~349--371. Zbl 0829.53025, MR1324138.

\bibitem{ML1}
\leavevmode\vrule height 2pt depth -1.6pt width 23pt, {\em Hofer's
  $l^\infty$--geometry: energy and stability of {H}amiltonian flows i, {II}},
  Invent. Math., 122 (1995), pp.~1--33, 35--69. Zbl 0844.58020 

\bibitem{MS2}
{\sc D.~McDuff and D.~Salamon}, {\em Introduction to symplectic topology},
  Oxford Math. Monographs, The Clarendon Oxford University Press, New York,
  second~ed., 1998. Zbl 0978.53120, MR1702941 

\bibitem{MS}
\leavevmode\vrule height 2pt depth -1.6pt width 23pt, {\em $J$--holomorphic
  curves and symplectic topology}, no.~52 in American Math. Society Colloquium
  Publ., Amer. Math. Soc., 2004. Zbl 1064.53051, MR2045629 

\bibitem{Morse.theory}
{\sc J.~Milnor}, {\em Morse theory}, Based on lecture notes by M. Spivak and R.
  Wells. Annals of Mathematics Studies, No. 51, Princeton University Press,
  Princeton, N.J., 1963. Zbl 0108.10401, MR0163331 

\bibitem{polterovich}
{\sc L.~Polterovich}, {\em Gromov's {$K$}-area and symplectic rigidity}, Geom.
  Funct. Anal., 6 (1996), pp.~726--739. Zbl 0873.58037, MR1406671 

\bibitem{Segal}
{\sc A.~Pressley and G.~Segal}, {\em Loop groups}, Oxford Mathematical
  Monographs,  (1986). Zbl 0852.22002, MR0900587 

\bibitem{Press}
{\sc A.~N. Pressley}, {\em The energy flow on the loop space of a compact {L}ie
  group}, J. London Math. Soc. (2), 26 (1982), pp.~557--566. Zbl 0508.58011, MR0684568 

\bibitem{fieldtheory}
{\sc Y.~Savelyev}, {\em Hamiltonian string background}, in preparation.

\bibitem{GS}
\leavevmode\vrule height 2pt depth -1.6pt width 23pt, {\em Quantum
  characteristic classes and the {H}ofer metric}, Geometry Topology,  (2008). Zbl 1146.53070, MR2443967 

\bibitem{U}
{\sc I.~Ustilovsky}, {\em Conjugate points on geodesics of {H}ofer's metric},
  Differential Geom. Appl., 6 (1996), pp.~327--342. Zbl 0864.58004, MR1422339 

\end{thebibliography}
\end{document}